\documentclass[reqno]{amsart}
\usepackage[utf8]{inputenc}

 \usepackage{mathrsfs}
\usepackage{amsmath,amsthm,amssymb,mathabx,amsfonts}
\usepackage{bbm}
\usepackage{graphicx}
\usepackage{subcaption}
\usepackage{ulem}
\usepackage{todonotes}
\usepackage{cleveref}
\usepackage{slashed}

\usepackage{color}

\usepackage{cancel}
\numberwithin{equation}{section}
\newtheorem*{thm}{Theorem}
\newtheorem{theorem}{Theorem}[section]

\newtheorem{lemma}[theorem]{Lemma}

\newtheorem{remark}[theorem]{Remark}
\newtheorem*{conjecture}{Conjecture}

\newcommand{\R}{\mathbb{R}}

\newcommand{\C}{\mathbb{C}}

\newcommand{\BSU}{\ensuremath{\mathrm{SU}(2)}}
\newcommand{\lsu}{\ensuremath{\mathfrak{su}(2)}}

\newcommand{\rev}[1]{{{#1}}}

\title[]{Slow dispersion in Floquet-Dirac Hamiltonians }

\author[]{Anthony Bloch}
\address{Department of Mathematics, University of Michigan, Ann Arbor, NJ 48109, USA}
\email{abloch@umich.edu}

\author[]{Amir Sagiv}
\address{Department of Mathematical Sciences, New Jersey Institute of Technology, University Heights, Newark, NJ 07102, USA}
\email{amir.sagiv@njit.edu}

\author[]{Stefan Steinerberger}
\address{Department of Mathematics and Department of Applied Mathematics, University of Washington, Seattle, WA 98195, USA}
\email{steinerb@uw.edu}

\thanks{AB is supported in part by NSF Grants No.\ DMS-2103026 and No.\ DMS-2605296, and AFOSR Grants No.\ FA9550-23-1-0215 and No.\ FA9550-23-1-0400. AS is partially supported by NSF Grant No.\ DMS-2508811.}

\begin{document}  
\begin{abstract}
We study dispersive decay for non-autonomous Hamiltonian systems.  While the general theory for dispersion in such non-autonomous systems is largely open, it was shown \cite{kraisler2025time} that there exists a time-periodically forced one-dimensional Dirac equation with an unusually slow dispersive decay rate of $t^{-1/5}$. It is to be expected that such behavior is not generic and requires a very particular forcing term;
we provide a more general ansatz and systematic procedure to construct such an equation with a dispersive decay rate no faster than $t^{-1/10}$. Our limitations are purely algebraic and it stands to reason that arbitrarily slow decay, $t^{-\varepsilon}$ for every $\varepsilon > 0$, should be achievable.
\end{abstract}

\maketitle
 
\section{Introduction}
\subsection{Decay estimates.}
Dispersion, the breakdown of waves over time, is a fundamental feature of Hamiltonian wave dynamics. Dispersive decay estimates quantify the spreading of waves under unitary evolution and play a central role in the analysis of linear and nonlinear dispersive equations. The simplest example is perhaps the one-dimensional Schr\"odinger equation
$ i u_t =  u_{xx}$ which admits a decay estimate
$$ \|u(t, \cdot)\|_{L^{\infty}_x} \leq \frac{c}{|t|^{1/2}} \| u(0,\cdot)\|_{L^1_x}  \, ,$$
or the one-dimensional Airy equation
$ u_t =  u_{xxx}$
which satisfies a similar inequality with a $t^{-1/3}$ rate.
These two estimates are classical, fairly easy to prove and correspond to our intuitive understanding: a higher degeneracy in the dispersion relation means that different frequencies travel at similar speed, thus yielding slower dispersion. In particular, higher derivatives imply slower (polynomial) rate of dispersion for low frequencies, which move more slowly.
The question of dispersive decay bounds in {\em autonomous} and linear Hamiltonian PDEs has been studied extensively, e.g., for time-independent (autonomous) Schr\"odinger Hamiltonians \cite{jensen1979spectral, komech2010weighted, kopylova2014dispersion,schlag2005dispersive} and, relevant to this work, Dirac Hamiltonians \cite{d2005decay, burak2019limiting, erdougan2021massless, erdougan2018dispersive, ERDOGAN2DMassive, erdougan2019dispersive,  green2024massless, kovavrik2022spectral, kraisler2023dispersive}. 
The dispersive behavior of \textit{non-autonomous} systems remains largely open, especially when the time-dependent term is not localized in space. \rev{The available literature on dispersive estimates (and more generally, Strichartz estimates) concerns Schr\"odinger equations \cite{beceanu2011new,beceanu2012schrodinger, galtbayar2004local, goldberg2009strichartz, marzuola2008strichartz, rodnianski2004time, tataru2008parametrices}, and more recently the Klein-Gordon and Dirac equations in dimension $d\ge 2$ \cite{herr2025strichartz}. Crucially, these works assume that the time-dependent term is a {\em perturbation of an autonomous Hamiltonian} in some sense. Often, this means that the time-dependent term is localized in space, and the operator is ``asymptotically flat''.} Here, we study non-autonomous settings, without such a localization assumption. In general non-autonomous settings, many of the techniques available in the autonomous settings no longer apply: one cannot ``read'' the dynamics off of the instantaneous Hamiltonian and its spectral properties.

\subsection{A non-autonomous system.}
In this work we consider the one-dimensional time-periodically forced Dirac equation
\begin{subequations}
\label{eq:tperDirac}
\begin{align}
              i\partial_t \alpha (t,x) &= \left( i\sigma _3 \partial_x + \nu (t) \right) \alpha(t,x),  \label{eq:Dirac}\\
               \alpha(0,x) &= f \in L^2 (\R ;\C ^2 ),  \label{eq:data}
\end{align}
\end{subequations}
where $\nu(t)$ is a bounded $T$-periodic $2\times 2$ Hermitian matrix-valued function, and $\sigma_3$ is the standard Pauli matrix; see \eqref{eq:pauli}. If the function $\nu(t)$ is time-independent, $\nu(t)=m\sigma_1$, then this is the so-called massive Dirac equation \cite{thaller2013dirac}, for which the following  estimate holds
$$\|\alpha(t,\cdot)\|_{L^\infty_x}\leq \frac{c}{t^{1/2}} \|\langle D\rangle ^{3/2} f\|_{L^1} \, ,$$ 
by means of Fourier analysis, where $D$ is the Hamiltonian $i\partial_x \sigma_3+m\sigma_1$ \cite{Erdogan21}.
{\it But what if $\nu(t)$ is \underline{non-constant} in time?} 
This type of equation arises as the effective (homogenized) dynamics of Floquet materials \cite{hameedi2023radiative, fefferman2014wave, sagiv2022effective}, an emergent and very active area of both theoretical \cite{ammari2021time, bal2022multiscale, graf2018bulk, hameedi2023radiative, rudner2020band, sagiv2023near, sagiv2022effective} and experimental research,  with applications in condensed matter physics \cite{cayssol2013floquet}, photonics \cite{ozawa2019topological}, and acoustics~\cite{xue2022topological}. 

\begin{thm}[informally, Kraisler, Sagiv, Weinstein \cite{kraisler2025time}] There exists a function $\nu(t)$ which  assumes the values $m \sigma_1$ and $-m \sigma_1$ periodically in time so that the (generic) dispersive decay rate is no faster than $t^{-1/3}$. Moreover, there exist (nongeneric) choices of parameters where it is no faster than $t^{-1/5}$. 
\end{thm}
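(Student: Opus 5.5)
The plan is to reduce the problem, via the Fourier transform in $x$ and Floquet theory in $t$, to a stationary-phase analysis of an oscillatory integral, and then read the decay rate off the degeneracy of a Floquet quasi-energy curve.

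\textbf{Reduction to Fourier--Floquet form.} Taking the Fourier transform in $x$, equation \eqref{eq:Dirac} decouples into a family of $2\times 2$ ODEs in $t$:
$$i\partial_t \hat\alpha(t,\xi) = \left(-\xi\sigma_3 + \nu(t)\right)\hat\alpha(t,\xi).$$
I take $\nu(t)$ piecewise constant, equal to $m\sigma_1$ on $[0,T/2)$ and to $-m\sigma_1$ on $[T/2,T)$. The monodromy matrix is then explicit:
$$M(\xi) = e^{-i\frac{T}{2}(-\xi\sigma_3 - m\sigma_1)}\, e^{-i\frac{T}{2}(-\xi\sigma_3 + m\sigma_1)}.$$
It lies in $\BSU$ and has eigenvalues $e^{\mp i\lambda(\xi)T}$, where $\cos(\lambda(\xi) T) = \tfrac12\operatorname{tr} M(\xi)$. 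Writing $\omega=\sqrt{\xi^2+m^2}$, a direct computation with products of exponentials of Pauli matrices gives
$$\tfrac12 \operatorname{tr} M = \cos^2(\omega T/2) + \frac{\xi^2 - m^2}{\omega^2}\sin^2(\omega T/2).$$

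By Floquet's theorem, at stroboscopic times $t=nT$ we have
$$\alpha(nT,x) = \frac{1}{2\pi}\sum_\pm \int e^{i(x\xi \mp n T\lambda(\xi))}\, P_\pm(\xi)\hat f(\xi)\,d\xi,$$
with $P_\pm$ the spectral projections of $M$. Boundedness of $\nu$ makes the propagator over one period uniformly bounded, so it suffices to work at times $nT$.

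\textbf{Lower bound from a degenerate critical point.} For $L^1\to L^\infty$ decay the relevant points are the $\xi_0$ where $\lambda''(\xi_0)=0$. Along the ray $x = nT\lambda'(\xi_0)$ the phase is stationary at $\xi_0$ to order $k$, where $k$ is the order of the first nonvanishing derivative $\lambda^{(k)}(\xi_0)\neq 0$. The Van der Corput / stationary phase asymptotics then give
$$|\alpha(nT,x)|\sim c\,n^{-1/k}$$
for suitable Schwartz data $f$ with $P_\pm(\xi_0)\hat f(\xi_0)\neq 0$. This holds provided $\lambda$ is smooth near $\xi_0$, i.e.\ $\cos\lambda T \neq \pm 1$ there, away from gap edges. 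The matching lower bound on $\sup_x|\alpha|$ comes from evaluating the asymptotics at that $x$, using a wave packet concentrated near $\xi_0$. Smooth compactly supported $\hat f$ of small support suffices, and the other branch contributes only a faster-decaying term.

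For the generic statement I show that $\lambda$ has an inflection point with $\lambda'''\neq 0$. For instance, $\lambda$ is odd-symmetric about some point, or $\lambda'$ is bounded and $\lambda$ is asymptotically linear, so $\lambda''$ must change sign. This gives $k=3$ and the rate $t^{-1/3}$. For the nongeneric statement I tune the two parameters $(m,T)$, after scaling effectively one parameter $mT$ together with the choice of $\xi_0$, so that $\lambda''(\xi_0)=\lambda'''(\xi_0)=\lambda^{(4)}(\xi_0)=0$ while $\lambda^{(5)}(\xi_0)\neq 0$, giving $t^{-1/5}$.

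\textbf{Main obstacle.} The hardest step is the nongeneric case. Working with the implicit function $F(\xi) := \tfrac12\operatorname{tr}M(\xi) = \cos(\lambda T)$, the vanishing conditions on $\lambda''$ through $\lambda^{(4)}$ become polynomial relations among the derivatives of $F$ at $\xi_0$. These are three equations in two unknowns $(\xi_0, mT)$, which is overdetermined. I would first exploit symmetry: by the evenness $\lambda(-\xi)=\lambda(\xi)$, or the reflection symmetry of $F$ about a band center, one equation holds automatically, for example $\lambda^{(4)}$ vanishing at a symmetric point. The remaining two equations in two unknowns then admit a solution by an intermediate-value or transversality argument. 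Finally I would check numerically or by explicit expansion that $\lambda^{(5)}\neq0$ and that the solution lies inside a band.
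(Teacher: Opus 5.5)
\textbf{There is a genuine gap.} Your framework (Fourier transform, monodromy in $\BSU$, stationary phase along the ray $x=nT\lambda'(\xi_0)$) is the right one. The paper only quotes this result, but its remarks point to the mechanism: two equal intervals, $F(0)=1$, odd rates, amplitude along $x=\mp n\theta'(0)$. Your proposal misses that mechanism, and the nongeneric step cannot work as written.

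\textbf{The trace formula has a sign error.} Since $(-i)^2=-1$ and $\tfrac12\mathrm{tr}(H_-H_+)=\xi^2-m^2$, the correct value is $\tfrac12\mathrm{tr}M=\cos^2(\omega T/2)-\frac{\xi^2-m^2}{\omega^2}\sin^2(\omega T/2)=1-\frac{2\xi^2}{\omega^2}\sin^2(\omega T/2)$. As a check, at $\xi=0$ your two factors are mutually inverse, so $M(0)=I$ and $F(0)=1$. This is exactly the two-interval, $t_1=t_2$ case the paper singles out: there $\sin\theta(0)=0$ and the rates come out odd.

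\textbf{The smoothness criterion is wrong here.} Since $M(\xi)\in\BSU$, $|F|\le1$ always, so there are no gaps or band edges. $F=\pm1$ only where $M=\pm I$, i.e.\ at crossings of the two Floquet branches, and the branches continue analytically through them. Explicitly $\sin(\theta/2)=\xi\sin(\omega T/2)/\omega$ (in your notation $\theta=\lambda T$). So $\theta(\xi)=2\arcsin\bigl(\xi\sin(\omega T/2)/\omega\bigr)$ is analytic and \emph{odd} near $0$, with $\theta'(0)=2\sin(mT/2)/m$. Your requirement $\cos\lambda T\neq\pm1$ would discard precisely $\xi_0=0$, the point that drives the result.

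\textbf{The symmetry and counting in the nongeneric step are backwards.} If $\lambda$ were even about $\xi_0$, all its odd derivatives would vanish there, including $\lambda^{(5)}(\xi_0)$. Then $k=5$ is unreachable; even symmetry yields even orders, as in the paper's $t^{-1/10}$. The leftover ``two equations in two unknowns by an intermediate-value or transversality argument'' is not an argument either. There is no intermediate value theorem for a planar system, and nothing you say guarantees a solution; compare the Newton--Kantorovich verification the paper needs for its own system.

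\textbf{What the correct count looks like.} You do gesture at odd symmetry in the generic case, and that is the right idea. But it comes from the crossing at $\xi_0=0$, which is then fixed rather than an unknown. Oddness gives $\theta''(0)=\theta^{(4)}(0)=0$ for free. So the generic claim is just $\theta'''(0)\neq0$, and the nongeneric claim is one equation in the single parameter $s=mT/2$. Expanding, $\theta'''(0)$ is a nonzero multiple of $3(s\cos s-\sin s)+\sin^3 s$. This is a nontrivial analytic function of $s$, so it is nonzero for all but isolated $s$. It equals $-3\pi$ at $s=\pi$ and $2$ at $s=3\pi/2$, so it vanishes somewhere in between. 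One then checks $\theta^{(5)}(0)\neq0$ at that root.

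\textbf{The alternative generic argument also fails.} The claim that bounded $\lambda'$ plus asymptotic linearity forces an inflection point is false as stated; $\sqrt{1+\xi^2}$ is a counterexample. And an inflection point by itself says nothing about $\lambda'''$.
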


We note that $t^{-1/5}$ is remarkably slow decay for an equation of this type  with no known analog in autonomous Schr{\"o}dinger or Dirac equations. We were motivated by the following question: {\em is a slower decay rate possible?} The main contribution of this work is to demonstrate that time-periodic forcing allows one to systematically engineer high-order degeneracies in the associated Floquet exponents (dispersion curves). These degeneracies directly control the stationary phase structure of the evolution operator and therefore the rate of dispersive decay. 
\begin{theorem}[Main Result] \label{thm:main}  There exists a choice of $\nu(t)$ such that the $L^1\to L^{\infty}$ decay rate is no faster than $t^{-1/10}$. More precisely: for any $r\geq 0$, any inequality of the form
$$\|\alpha(t,\cdot)\|_{L^\infty_x}\leq \frac{c}{t^{\sigma}} \|\langle i\partial_x \rangle ^{r}\alpha(0, \cdot )\|_{L^1_x} \, ,$$
may only hold if $0<\sigma \leq 1/10$.
\end{theorem}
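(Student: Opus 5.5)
Since $\nu$ does not depend on $x$, the natural first step is to take the Fourier transform in $x$. Each frequency $\xi$ then evolves according to the $2\times 2$ ODE $i\partial_t\hat\alpha(t,\xi)=(-\xi\sigma_3+\nu(t))\hat\alpha(t,\xi)$. We write $U_\xi(t)\in \BSU$ for its propagator, after removing the trace of $\nu$ by a scalar phase. Over one period we obtain the monodromy $M(\xi)=U_\xi(T)$, with eigenvalues $e^{\mp i\theta(\xi)T}$; here $\theta(\xi)$ is the Floquet exponent (the dispersion curve). At the stroboscopic times $t=NT$ the solution is
\[
\alpha(NT,x)=\frac{1}{2\pi}\int e^{ix\xi}M(\xi)^N\hat f(\xi)\,d\xi
=\sum_{\pm}\frac{1}{2\pi}\int e^{i(x\xi\mp NT\theta(\xi))}P_\pm(\xi)\hat f(\xi)\,d\xi ,
\]
where $P_\pm$ are the smooth (analytic) spectral projections, valid wherever $\theta$ stays away from $0$ and $\pi/T$. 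The lower bound will come from the following principle. Suppose $\theta$ has an inflection-type degeneracy at some $\xi_0$, say $\theta(\xi)=\theta(\xi_0)+a(\xi-\xi_0)+c(\xi-\xi_0)^{10}+O((\xi-\xi_0)^{11})$ with $c\neq 0$, so that $\theta''=\dots=\theta^{(9)}=0$ at $\xi_0$. Take Schwartz data $f$ with $\hat f$ supported near $\xi_0$ and $P_+(\xi_0)\hat f(\xi_0)\neq 0$. Evaluating at $x=NTa$, degenerate stationary phase (van der Corput sharpness) gives $|\alpha(NT,NTa)|\gtrsim N^{-1/10}$. The $P_-$ part is nonstationary there and decays faster. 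Since $\|\langle i\partial_x\rangle^r f\|_{L^1}<\infty$ for every $r$, any inequality of the stated form forces $\sigma\le 1/10$.

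The construction of $\nu$ follows the approach of \cite{kraisler2025time}. I would take $\nu(t)$ piecewise constant, switching among a few Hermitian matrices $\pm m\sigma_1$ (and possibly $m\sigma_2$ or other masses and durations) on subintervals of lengths $\tau_1,\dots,\tau_k$. Then $M(\xi)$ is an explicit product of exponentials $\exp(-i\tau_j(-\xi\sigma_3+m_j\cdot\sigma))$, each of which is $\cos(\tau_j\omega_j)I-i\sin(\tau_j\omega_j)\hat n_j\cdot\sigma$ with $\omega_j=\sqrt{\xi^2+|m_j|^2}$. The Floquet exponent is then determined by
\[
\cos(\theta(\xi) T)=\tfrac12\operatorname{tr}M(\xi)=:F(\xi),
\]
which is an explicit trigonometric–algebraic function of $\xi$ and the parameters $(m_j,\tau_j)$. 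Degeneracy of $\theta$ at $\xi_0$ means that the Taylor coefficients of $\arccos F$ of orders $2$ through $9$ vanish at $\xi_0$, while the tenth does not. I would work at a symmetric point such as $\xi_0=0$. If the switching pattern is chosen time-reversal or parity symmetric, then $F$ is even in $\xi$ and $\theta$ is even (hence $a=0$, and the degeneracy is a flat minimum with odd derivatives vanishing automatically). In that case only the coefficients of $\xi^2,\xi^4,\xi^6,\xi^8$ need to be killed, which gives four equations. I would then count parameters so that there are at least four free ones plus normalisations, solve the resulting polynomial/trigonometric system, and check that the $\xi^{10}$ coefficient is nonzero. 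Because $\theta$ is even with a flat extremum, the stationary point sits at $x=0$ and the lower bound reads $|\alpha(NT,0)|\gtrsim N^{-1/10}$. One also has to confirm that $F(0)\neq\pm1$, so that the eigenvalues are simple and $P_\pm$ are analytic near $0$.

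The main obstacle is this algebraic step: exhibiting parameters for which the four nonlinear equations have an actual real solution, with genuinely nonzero tenth-order term and with $F(\xi_0)\in(-1,1)$. The abstract already signals this as the limiting factor. My first attempt would be to expand $F$ in $\xi$ symbolically, reduce the conditions to polynomial equations in $\cos(m_j\tau_j)$, $\sin(m_j\tau_j)$ and the $\tau_j$, and then find a solution either by an intermediate-value or implicit-function argument starting from the $t^{-1/5}$ configuration of \cite{kraisler2025time}, or by a numerically located root certified with interval arithmetic. The remaining analytic steps are routine: the van der Corput lower bound, the handling of the other band, and the restriction to stroboscopic times, which suffices because a bound for all $t$ implies one for $t=NT$.
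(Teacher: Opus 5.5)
Your reduction is the same as the paper's. You Fourier transform in $x$, pass to the $\BSU$ monodromy $\hat M(\xi)$, use $F=\tfrac12\mathrm{Tr}\,\hat M=\cos\theta$, use evenness of $F$ so that only the $\xi^2,\xi^4,\xi^6,\xi^8$ coefficients must vanish, impose $F(0)\neq\pm1$, and apply degenerate stationary phase at $x=0$.

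The genuine gap is that you stop exactly where the theorem's content begins. You never show that $a_2=a_4=a_6=a_8=0$ has a real solution with positive durations and $F(0)\in(-1,1)$; you only list strategies. Four transcendental equations in four unknowns need not have such a root. As written, you have proved only: \emph{if} such parameters exist, then $\sigma\le 1/10$.

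One of your strategies also fails. The $t^{-1/5}$ example of \cite{kraisler2025time} is a two-letter word with $t_1=t_2$, so $F(0)=\cos(t_1-t_2)=1$. It sits at an eigenvalue crossing (which is why its rates are odd), not on the solution set you need. So it gives no base point for an implicit-function argument. An intermediate-value argument does not apply to a $4\times4$ system without a degree computation.

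The paper closes the gap concretely:
\begin{enumerate}
\item It takes $\hat M=g_+(t_1)g_-(t_2)g_+(t_3)g_-(t_4)$ with masses $\pm1$, so that $F(0)=\cos(t_1-t_2+t_3-t_4)$.
\item It computes $a_2,\dots,a_8$ symbolically as finite sums of terms (bounded rational coefficient)$\times$(monomial of degree $\le4$)$\times$(sine or cosine of $\pm t_1\pm t_2\pm t_3\pm t_4$).
\item A numerical search finds $x_0\approx(4.0889,\,3.1175,\,2.6152,\,1.7628)$ with $\|H(x_0)\|\approx5\cdot10^{-15}$.
\item Newton--Kantorovich certifies a true root nearby. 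Its inputs are $\|DH(x_0)^{-1}\|\approx0.35$ and a crude Lipschitz bound on $DH$ read off from the term structure.
\item At that root $t_1-t_2+t_3-t_4\approx1.82$, so $F(0)\neq\pm1$.
\end{enumerate}
Your ``numerically located root certified with interval arithmetic'' is the right kind of argument, but it has to be carried out.

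Two smaller corrections.

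First, in the even case you end up using, $x=0$ is stationary for \emph{both} phases $\pm NT\theta(\xi)$. So your claim that the $P_-$ part is nonstationary and decays faster is false there. Both bands contribute at order $N^{-1/10}$, with factors $e^{\mp iN\theta(0)T}$ and stationary-phase constants of equal modulus. They cannot cancel, because $P_+(0)\hat f(0)\perp P_-(0)\hat f(0)$ ($\hat M(0)$ is unitary with distinct eigenvalues). Hence $|\alpha(NT,0)|\approx cN^{-1/10}|\hat f(0)|$. Alternatively, take $\hat f(\xi)=\chi(\xi)P_+(\xi)e$ with a cutoff $\chi$ and a fixed vector $e$, so the $P_-$ term vanishes identically. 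This is the computation behind the paper's $|\alpha(nT,0)|=Cn^{-1/k}|\int f\,dx|+o(n^{-1/k})$.

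Second, you do not need the tenth-order coefficient to be nonzero for this statement. $F$ is real-analytic in $\xi$ and nonconstant: it behaves like $\cos(\xi\sum_j t_j)$ as $\xi\to\infty$. If $a_{10}$ vanished, the first nonvanishing $a_{2j}$ with $2j>10$ would give an even slower rate, which still forces $\sigma\le1/10$.
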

The proof of this Theorem appears at the end of Section \ref{sec:dispersion}. We note that no amount of smoothing, $\langle i\partial_x\rangle ^{-r}$, can ``fix'' the low rate of decay. In fact, as we shall see in Theorem \ref{thm:exp}, for sufficiently low-energy initial data, this rate appears in its asymptotic expansion as the result of $10$-th order polynomial degeneracy of the dispersion relation.

{\bf Our approach} is completely new and can presumably be used to obtain similar results for other equations: a zero of high order in the derivative of the Floquet exponent is equivalent to having a region of extraordinary flatness in the dispersion relation (see Sec.~\ref{sec:dispersion}). 

We were motivated by the idea that if one had a $k$-dimensional vector space of `generic' potentials at their disposal, then one should be able to force the dispersion relation to vanish to order $2k$ somewhere (by a simple counting heuristic, assuming everything behaves linearly, which of course is not actually the case). Since the unitary $L^2(\R ;\C^2)$ flow can be Fourier-transformed into a parametric family of $\BSU$ matrices, the problem can be recast in algebraic terms. We proceed in two steps:

\begin{enumerate}
\item By carrying out extensive computations, our ansatz reduces the problem to showing that an explicit system of four nonlinear equations in four variables has a solution; the most complicated equation is the sum of $295$ terms, and there is no indication that a solution would have a simple closed form.

\item We then prove the existence of a solution by first numerically identifying an \textit{approximate} solution (up to an error of $10^{-15}$). We then use the Newton--Kantorovich theorem to prove that Newton's method, initialized at this point, converges to an actual root,  establishing the existence of a solution.
\end{enumerate}

This construction is carried out for a set of potentials $\nu(t)$ parametrized by $k=4$ variables, allowing us to eliminate derivatives $2$ through $9$, with the tenth derivative then being the dominant term. We do not see any reason to believe that there is anything special about $k=4$. However, the algebraic obstruction persists: as $k\ge 2$ and the number of ``tuning parameters'' in $\nu(t)$ increases, the number of terms in the associated algebraic equations grows dramatically. We therefore conjecture:

\begin{conjecture}
For any $\varepsilon>0$, there exists a choice of $\nu(t)$ such that the $L^1\to L^{\infty}$ decay rate is no faster than $t^{-\varepsilon}$. More precisely: for any $r \ge 0$,
$$\|\alpha(t,\cdot)\|_{L^\infty_x}\leq \frac{c}{t^{\sigma}} \|\langle i\partial_x \rangle ^{r}\alpha(0, \cdot )\|_{L^1_x} \qquad \mbox{requires} \qquad 0<\sigma \leq \varepsilon.$$
\end{conjecture}
 One could make a stronger conjecture: that $\nu(t)$ could actually be chosen to be piecewise constant, assuming only the two values $\pm  \sigma_1$ and to be defined on $\leq 100/\varepsilon$ intervals (which are then periodically extended), with only the length of these intervals be tunable parameters. By the constructive nature of our approach, however, it is not clear how to show that this approach yields the conjecture for all $\varepsilon>0$ (rather than for any given $\varepsilon$). Additional ideas are required.

\section{Obtaining dispersive decay estimates}\label{sec:dispersion}

Let the Pauli matrices be
\begin{equation}\label{eq:pauli}
\sigma_0 = I_{2\times 2} \, , \qquad \sigma_1 = \begin{pmatrix} 0 & 1\\ 1 & 0 \end{pmatrix} \, , \qquad
\sigma_2 = \begin{pmatrix} 0 & -i\\ i & 0 \end{pmatrix} \, ,\qquad 
\sigma_3 = \begin{pmatrix} 1 & 0\\ 0 & -1 \end{pmatrix} \, ,
\end{equation}
and set $\nu(t)= m(t)\sigma_1$, where $m(t)$ is a scalar function to be determined.
Since \eqref{eq:tperDirac} is translation invariant in space, it can be Fourier transformed to yield the following $\xi\in\R$-parametrized family of $2\times 2$ ODEs:
\begin{equation}\label{eq:ODEs}
i\frac{d}{dt} \hat{\alpha}(t;\xi) = \left[\xi\sigma _3+m(t)\sigma_1 \right]\hat{\alpha}(t;\xi) \, , 
\qquad \hat{\alpha}(0;\xi)=\hat{f}(\xi) \, ,
\end{equation}
where $\hat{f}$ is the Fourier Transform of the initial data, $f$, see \eqref{eq:tperDirac}. Writing the propagator of \eqref{eq:ODEs} as $\hat{U}(t;\xi)\hat{f}(\xi)=\hat{\alpha}(t;\xi)$, the solution of \eqref{eq:tperDirac} can be written as
\[
\alpha(t,x) = [U(t)f](x)=  \frac{1}{\sqrt{2\pi}}\int_{\R} e^{i\xi x}\hat{U}(t;\xi)\hat{f}(\xi) \, d\xi \, .
\]
Since $m(t)$ is $T$-periodic, to gain insight into the long-time dynamics we consider the monodromy operator $\mathcal{M}=U(T)$.
Thus for every $n\geq 0$,
\[
\alpha(nT,x) = [\mathcal{M}^n f](x)=\frac{1}{\sqrt{2\pi}}\int_{\R} e^{i\xi x}\hat{M}^n(\xi)\hat{f}(\xi) \, d\xi \, ,
\]
where $\hat{M}(\xi):=\hat{U}(T;\xi)$ is the period propagator (monodromy operator) associated with \eqref{eq:ODEs} or, equivalently, the Fourier transform of the operator $\mathcal{M}$.
For each fixed~$\xi$, the instantaneous Hamiltonian $H(t;\xi)\equiv \xi\sigma_3+m(t)\sigma_1$ is Hermitian, and so for all $t$, the propagator is a $2\times 2$ unitary matrix. In addition, since ${\rm Tr}(H(t;\xi))=~0$, then $\hat{U}(t;\xi)\in \BSU$, and in particular $\hat{M}(\xi)\in \BSU$. Explicitly, $\hat{M}(\xi)$  has two complex-conjugate eigenvalues denoted $\lambda_{\pm}(\xi)=\exp(\pm i\theta(\xi))$ and a unitary diagonalizing matrix $P(\xi)$.
Thus the solution at integer-period times can be written as the oscillatory integral
\begin{equation}\label{eq:oscilInt}
\alpha(nT,x) = \frac{1}{\sqrt{2\pi}}\int_{\R} e^{i\xi x}\,P(\xi)\begin{pmatrix}
    e^{in\theta(\xi)} & 0 \\ 0 & e^{-in\theta (\xi)}
\end{pmatrix} P^* (\xi)\hat{f}(\xi) \, d\xi \, .
\end{equation}
This leads to the following strategy for obtaining slow dispersion:
\begin{enumerate}
    \item Construct $\nu(t)$ so that $\theta^{(j)}(0)=0$ for $2\leq j\leq k-1$ while $\theta^{(k)}(0)\neq 0$; see Section \ref{sec:existence}. This is the main technical novelty of this work.
    \item Use oscillatory integral expansions to show that band-limited wavepackets (i.e.\ data $f$ with ${\rm supp}\,\hat f\subseteq[-d,d]$) disperse at rate $t^{-1/k}$.
\end{enumerate}
 In the remainder of this section we implement Step (2). To prove the main result, we will prove the following asymptotic expansion.
\begin{theorem}\label{thm:exp}
Suppose $\nu (t)$ is such that $\theta^{(j)}(0)=0$ for $1\leq j\leq k-1$ while $\theta^{(k)}(0)\neq 0$. Then there exist $C,d>0$ such that for all initial data $f\in L^2 (\R ;\C^2)$ with its Fourier Transform satisfying $
\hat{f}\in \rev{C_0^\infty((-d,d);\C^2)}$,
one has
\[ \rev{
|\alpha(nT,0)|
=
Cn^{-1/k}
\left|
\int_{\R} f(x)\,dx
\right| ~ +o(n^{-1/k}) \, ,
\qquad {\rm as}~~ n\to\infty \, .}
\]
\rev{Furthermore, there exists an $f$-dependent constant $C>0$ such that 
$$ \|\alpha (nT, \cdot )\|_{\infty} \lesssim C_fn^{-1/k} \, , \qquad \forall n\geq 1 \, .$$}
\end{theorem}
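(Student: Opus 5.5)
The plan is to read \eqref{eq:oscilInt} as a sum of two scalar oscillatory integrals with phases $x\xi\pm n\theta(\xi)$, and to analyze them with degenerate stationary phase and van der Corput's lemma near $\xi=0$.

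\textbf{Step 1: local smoothness of the spectral data.} I first choose $d$ small so that $\theta$ and $P$ are smooth on $[-d,d]$. Since $m(t)$ is real and $H(t;\xi)$ depends analytically on $\xi$, the map $\xi\mapsto\hat M(\xi)$ is real-analytic. If the eigenvalues at $\xi=0$ are simple, i.e.\ $\theta(0)\notin\pi\mathbb{Z}$, then analytic perturbation theory gives analytic $\theta(\xi)$ and analytic spectral projections $\Pi_\pm(\xi)=P(\xi)E_{\pm}P^*(\xi)$ near $0$, where $E_+={\rm diag}(1,0)$ and $E_-={\rm diag}(0,1)$. The hypothesis $\theta^{(k)}(0)\neq0$ should be read in this smooth branch, so I will assume non-degeneracy at $\xi=0$; I expect it to be supplied by the construction in Section~\ref{sec:existence}. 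I also shrink $d$ so that $|\theta^{(k)}(\xi)|\ge \tfrac12|\theta^{(k)}(0)|=:\tfrac12 a$ on $[-d,d]$. Then
\[
\alpha(nT,x)=\tfrac{1}{\sqrt{2\pi}}\sum_\pm\int e^{i(x\xi\pm n\theta(\xi))}g_\pm(\xi)\,d\xi,\qquad g_\pm:=\Pi_\pm\hat f\in C_0^\infty((-d,d)).
\]

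\textbf{Step 2: the uniform bound.} For each fixed $x$ the phase $\phi(\xi)=x\xi\pm n\theta(\xi)$ satisfies $|\phi^{(k)}|\ge na/2$ on the support, because $x\xi$ is linear and $k\ge2$. Van der Corput's lemma with the $k$-th derivative, together with integration by parts against the amplitude, then gives
\[
\Bigl|\int e^{i\phi}g_\pm\Bigr|\le c_k (na)^{-1/k}\bigl(\|g_\pm\|_\infty+\|g_\pm'\|_{L^1}\bigr),
\]
uniformly in $x$. This yields $\|\alpha(nT,\cdot)\|_\infty\lesssim C_f n^{-1/k}$.

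\textbf{Step 3: the asymptotics at $x=0$.} Here I apply the standard expansion for a degenerate critical point. Taylor expansion gives $\theta(\xi)=\theta(0)+\tfrac{a}{k!}\xi^k(1+O(\xi))$, and I change variables $\eta^k=\xi^k(1+O(\xi))$, which is smooth and preserves the sign of $\xi$. This reduces the integral to the model $\int e^{\pm i n \frac{a}{k!}\eta^k}\tilde g(\eta)d\eta$. Splitting $\tilde g=\tilde g(0)\chi+(\tilde g-\tilde g(0)\chi)$ and integrating by parts in the remainder shows that the remainder is $o(n^{-1/k})$. The model integral equals $\kappa_k (k!/(n|a|))^{1/k}e^{\pm i\omega_k}$, with explicit $\kappa_k>0$ and phase $\omega_k$ coming from Gamma-function integrals. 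When $k$ is even, $\kappa_k=2\Gamma(1/k)/k$ and $\omega_k=\pm\pi/(2k)$. When $k$ is odd the $\eta<0$ contribution combines with the $\eta>0$ one and gives a real positive multiple.

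\textbf{Step 4: identifying the leading term.} Combining the two signs, the leading term is $n^{-1/k}c\,[e^{i\beta_n}\Pi_+(0)+e^{-i\beta_n}\Pi_-(0)]\hat f(0)$, where $\beta_n=n\theta(0)+\omega_k$ and $c$ is a constant. Since $\Pi_\pm(0)$ are complementary orthogonal projections, this vector has norm exactly $c|\hat f(0)|=\tfrac{c}{\sqrt{2\pi}}|\int f\,dx|$, independently of $n$. This gives the claimed formula with $C=c/(2\pi)$, interpreting $|\cdot|$ as the $\C^2$ norm.

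The main obstacle is Step~1, namely guaranteeing a smooth branch $\theta$ and smooth projections at $\xi=0$, which needs $\theta(0)\notin\pi\mathbb{Z}$. At a band crossing the projections can be non-smooth, and the expansion would have to be redone with the analytic eigen-branches instead.
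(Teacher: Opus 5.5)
Your proof is correct and follows essentially the same route as the paper. It uses van der Corput with $|\partial_\xi^k(x\xi\pm n\theta(\xi))|\ge cn$ uniformly in $x$ for the sup bound, and degenerate stationary phase at $\xi=0$ for $x=0$; your change of variables $\eta^k=\xi^k(1+O(\xi))$ derives the expansion \eqref{eq:hormander_adapted} that the paper cites from H\"ormander.

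Your Step~4 is more careful than the paper's write-up, which puts the same constant on both components: you correctly track the conjugate phases $e^{\pm i\beta_n}$ and use that $\Pi_\pm(0)$ are complementary orthogonal projections. The assumption $\theta(0)\notin\pi\mathbb{Z}$ is implicit in the paper as well, since it needs a smooth $P(\xi)$. It holds in the application by \eqref{eq:F0} and Lemma~\ref{lem:akszero}, and it could be dropped anyway by Rellich's theorem on analytic eigenprojections. The only slip is cosmetic: with your normalization of $c$, the constant should be $C=c/\sqrt{2\pi}$.
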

Since the rates we derive are very slow, obtaining asymptotic expansions (rather than upper bounds) emphasizes that these rates are inherent to the dynamics. Since such expansions are obtained for a broad class of data (for which $\int Pf \neq 0$),  a general dispersive decay estimate would have a rate slower or equal to $t^{-1/k}$. Since the expansion in Theorem \ref{thm:exp} is valid for low-$\xi$ data, no amount of smoothing can ``fix'' that slow rate. Hence, the main result, Theorem \ref{thm:main}, follows.

\begin{proof}[Proof of Theorem \ref{thm:exp}]
We first recall the following expansion \cite[Eq.\ 7.7.31]{hormander2007analysis}: for even $k$ and $u\in C_0^\infty(\R)$,
\[
\int_{\R} e^{i\omega t^k}u(t)\,dt
= C_{k,0}\,\omega^{-1/k}u(0) + O(\omega^{-3/k}) \, , 
\qquad {\rm as}~~\omega\to+\infty \, ,
\]
where $C_{k,0}=2k^{-1}\Gamma(k^{-1})e^{\pi i/(2k)}$.
By a standard change-of-variables argument (see e.g., \cite[Appendix A]{kraisler2025time}), one obtains the following variant: if $\lambda\in C^\infty$ satisfies
$
\lambda'(0)=\cdots=\lambda^{(k-1)}(0)=0$ and $ \lambda^{(k)}(0)\neq 0$, and has no other points of stationary phase in its support,
then there exists $C_k\neq 0$ such that for $u\in C_0^\infty(\R)$,
\begin{equation}\label{eq:hormander_adapted}
\int_{\R} e^{i\omega\lambda(t)}u(t)\,dt
= e^{i\omega\lambda(0)}\,C_k\,\big(\omega\,\lambda^{(k)}(0)\big)^{-1/k}u(0) + O(\omega^{-3/k}) \, , 
\qquad {\rm as}~~ \omega\to+\infty .
\end{equation}

Analogous expansions are found e.g., in \cite{hormander2007analysis} for odd $k$, but as we shall see in Sec.\ \ref{sec:construction}, in this work $k$ is always even. Let $d>0$ be sufficiently small such that $\theta ^{(k)}(\xi)\neq0$ for $\xi\in (-d,d)$, and let $f\in L^2(\R;\C^2)$ satisfy ${\rm supp}\,\hat f\subseteq(-d,d)$.
Write $P^*(\xi)\hat f(\xi)=(\hat\phi_+(\xi),\hat\phi_-(\xi))^\top$.
Then
\begin{align*}
    (\mathcal{M}^n f)(x) &=  \frac{1}{\sqrt{2\pi}}\int_{\R} P(\xi) \begin{pmatrix}
     e^{+ i n\theta(\xi)}& 0 \\
     0&  e^{- i n\theta(\xi)}\end{pmatrix} P^* (\xi) \hat{f}(\xi)e^{i\xi x} \, d\xi\\
     &=\frac{1}{\sqrt{2\pi}}\int\limits_{-b}^b P(\xi)\begin{pmatrix}
        e^{+ i n\theta(\xi)}\hat \phi_{+}(\xi) \\ e^{- i n\theta(\xi) }\hat \phi_{-}(\xi)
    \end{pmatrix}e^{i\xi x} \, d\xi \\
    &= \rev{\frac{1}{\sqrt{2\pi}}\int\limits_{-b}^b P(\xi)\begin{pmatrix}
        e^{ i\Theta_+(\xi, x,n)}\hat \phi_{+}(\xi) \\ e^{ i\Theta_-(\xi, x,n) }\hat \phi_{-}(\xi)
    \end{pmatrix} \, d\xi
    \, , \qquad {\rm where}} \\
    &\rev{\Theta _{\pm} (\xi,x,n)}\rev{\equiv \pm n\theta (\xi)+\xi x \, .}
\end{align*}
\rev{Since $\theta'(0)=0$, we will show that the solution achieves the slow decay rate predicted by \eqref{eq:hormander_adapted} at $x=0$.} Indeed
\[
\rev{\partial_\xi \Theta_{\pm}(0 , 0,n)= \pm n\theta'(0)+0 =0 \, .}
\]
As we assumed that $\theta^{(j)}(0)=0$ for $2\le j\le k-1$ and $\theta^{(k)}(0)\neq 0$, we have
\[
\partial_\xi^j\Theta_{\pm}(0, 0, n)=\pm n\theta^{(j)}(0)=0 \, , \quad \quad 2\le j\le k-1 \, ,
\]
and
$
\partial_\xi^k\Theta_{\pm}(0,0, n)=\pm \theta^{(k)}(0)\neq 0 $.
Applying \eqref{eq:hormander_adapted} with $\omega=n$, at the point of stationary phase $\xi=0$ for $x=0$, we get
\[
\rev{ \alpha (nT, 0 ) =[\mathcal{M}^n f](0)
=
\frac{1}{2\pi} C_k(\theta^{(k)}(0))^{-1/k}
P(0)
\begin{pmatrix}
\hat\phi_+(0)\\
\phi_- (0)
\end{pmatrix}
n^{-1/k}
+O(n^{-3/k}) \, .}
\]
\rev{Noting the elementary Fourier identity}
\[
\rev{
\begin{pmatrix}
\hat\phi_+(0)\\
\hat\phi_-(0)
\end{pmatrix}
=
P^*(0)\hat f(0)
=
\frac{1}{\sqrt{2\pi}}P^*(0)\int_{\R} f(x)\,dx \, \,,
}
\]
\rev{and since $P^*(0)$ is a unitary matrix, we get the first part of Theorem \ref{thm:exp}.}

\rev{For the upper bound, shrink $d>0$ if necessary so that
$|\theta^{(k)}(\xi)|\geq c>0$ for $|\xi|<d$. Then we can bound $|\Theta _{\pm}^{(k)}(\xi,n,x)|\geq cn>0$ as well, for $|\xi|<d$ and
uniformly in $x\in\R$. Hence, we may use Van der Corput lemma (cited below for completeness) to each component of $\alpha (nT,x)$, yielding the upper bound.}
 \begin{lemma}[Van der Corput \cite{stein1993harmonic}]\label{lem:vandercorput}
    Let $\lambda$ be a smooth function and $f$ a smooth, compactly supported function. Suppose there exists $\lambda_0>0$, such that $\vert\lambda^{(k)}(z)\vert \geq \lambda_0$. Then there exists a constant, $c_k$, depending only on $k$ such that
    \begin{align}
        \left\vert\int_{\R} f(z)e^{i\lambda(z)} \, dz\right\vert \leq c_k \lambda_0^{-1/k}\| f'\|_{L^1}\ .
    \end{align}
\end{lemma}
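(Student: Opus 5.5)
This is the standard Van der Corput lemma. I would prove it by induction on $k$, starting from a bound on the oscillatory integral over an interval $[a,b]$ with amplitude $1$,
\[
\Bigl|\int_a^b e^{i\lambda(z)}\,dz\Bigr| \le c_k \lambda_0^{-1/k},
\]
where for $k=1$ one additionally assumes that $\lambda'$ is monotone. The stated amplitude version then follows by integration by parts.

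\textbf{Base case $k=1$.} Write
\[
e^{i\lambda} = \frac{1}{i\lambda'}\,\frac{d}{dz}e^{i\lambda}
\]
and integrate by parts. The boundary terms are bounded by $2/\lambda_0$. The remaining term is $\int |(1/\lambda')'|$, which equals the total variation of $1/\lambda'$. Since $\lambda'$ is monotone, this variation telescopes to at most $1/\lambda_0$. Hence $c_1=3$.

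\textbf{Inductive step.} Assume the claim for $k-1$, and suppose $|\lambda^{(k)}|\ge\lambda_0$ with $k\ge2$. Then $\lambda^{(k-1)}$ is strictly monotone, so it vanishes at most at one point $z_0$. Let $\delta>0$ be a parameter to be chosen.
\begin{itemize}
\item[(i)] On $|z-z_0|<\delta$, bound the integral trivially by $2\delta$.
\item[(ii)] Off this interval, $|\lambda^{(k-1)}|\ge\lambda_0\delta$. The interval $[a,b]$ minus $(z_0-\delta,z_0+\delta)$ has at most two components, and the induction hypothesis gives a contribution of at most $2c_{k-1}(\lambda_0\delta)^{-1/(k-1)}$ from them.
\end{itemize}
For $k=2$, note that $\lambda'$ is monotone because $\lambda''$ has a fixed sign, so the base case applies. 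Choosing $\delta=\lambda_0^{-1/k}$ balances the two contributions and yields $c_k\lambda_0^{-1/k}$ with $c_k$ depending only on $k$.

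\textbf{Adding the amplitude $f$.} Set $F(z)=\int_{-\infty}^z e^{i\lambda}$. Since $f$ is compactly supported,
\[
\int f e^{i\lambda} = -\int f' F,
\]
and the boundary terms vanish. The interval bound gives $|F(z)|\le c_k\lambda_0^{-1/k}$ uniformly in $z$, because the integration effectively runs over a bounded interval containing the support of $f$. The stated estimate with $\|f'\|_{L^1}$ follows.

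\textbf{Main obstacle.} The delicate point is the $k=1$ case, which requires monotonicity of $\lambda'$. In the induction this is automatic for $k=2$, and for $k\ge3$ we only ever invoke the hypothesis at level $k-1\ge2$, where no monotonicity is needed. I would also check carefully that the constants stay independent of the interval $[a,b]$.
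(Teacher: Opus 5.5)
Your argument is correct and is exactly the standard proof in Stein's \emph{Harmonic Analysis} (Ch.~VIII, Prop.~2 and its corollary), which the paper cites without giving a proof of its own. The extra monotonicity hypothesis you impose for $k=1$ is genuinely needed, since the lemma as literally stated fails for $k=1$ (the paper only applies it with $k=10$). Beyond that, define $F(z)=\int_a^z e^{i\lambda}$ with $a$ the left endpoint of an interval containing ${\rm supp}\, f$ rather than integrating from $-\infty$, and, if $\lambda^{(k-1)}$ has no zero on $[a,b]$, center the excised interval at the point of $[a,b]$ where $|\lambda^{(k-1)}|$ is smallest.
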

\end{proof}

   \begin{remark}
     By Fourier Transforming the PDE \eqref{eq:tperDirac} into the family of ODEs \eqref{eq:ODEs}, the problem can be cast in the language of control theory \cite{coron2007control}: given the ODE $i\frac{d}{dt}\hat{\alpha}(t;\xi) = \xi\sigma _3\hat{\alpha}$, one tries to minimize dispersion by adding the control $\nu (t)$. Since the flow is constrained to the Lie Group $\BSU$ for each $\xi$, this is a problem in geometric control theory \cite{bloch2015nonholonomic, bullo2005geometric}. Usually, one is interested in controlling the state, i.e., driving $\alpha$ to a point or set at finite time. Here, we seek to control the $\xi$-derivatives of the flow (or more specifically, of the trace of the Flow operator). To the best of our knowledge, such problems have not been previously studied in control theory. Related problems involving parametrically forced systems and growth rates have been studied in the context of Hill’s equations, see e.g \cite{adams2010hill}.
   \end{remark}

   \begin{remark}
       \rev{Having $\theta ' (0)=0$ is not essential. When this is not the case, the $O(n^{-1/k})$ amplitudes are attained not at $x=0$, but rather along the ray $x_n =~\mp n \theta '(0)$. See details in e.g., \cite{kraisler2025time}.}
   \end{remark}
\section{Existence of solutions}\label{sec:existence}
\subsection{Preliminaries}
We wish to study the Dirac equation \eqref{eq:tperDirac} with piecewise constant, alternating mass: fix $t_1,\ldots,t_m>0$, define $\tau_0=0$, and for $1\le k\le m$ set
$\tau_k=\sum_{j=1}^k t_j$.   Then $\nu (t)=m(t)\sigma_1$ is $\tau_m$-periodic with
\[
m(t)=
\begin{cases}
1, & t\in[\tau_{2k} \, ,\tau_{2k+1}) \, , \quad 0\le k\le m/2 \, ,\\
-1, & t\in[\tau_{2k+1} \, ,\tau_{2k+2}) \, , \quad 0\le k<m/2 \,  .
\end{cases}
\]
We now want to express the monodromy operator (period flow) associated with the Fourier-Transformed ODEs \eqref{eq:ODEs},  $\hat{M}(\xi)$, in an explicit, algebraic way. In what follows, we use basic properties of the Special Unitary Group of order $2$, $\BSU$, and its associated Lie Algebra, $\lsu$. For a thorough exposition, see e.g., \cite{hall2013lie}. Define two functions  $a_{\pm}:\R \to \lsu$ via
\[
a_{\pm}(\xi)\equiv \pm i\sigma_1 + i\xi\sigma_3 \, .
\]
Next, to define the propagator on any time interval $[\tau_{2k}, \tau_{2k+1}]$, first recall the standard polar representation of exponents of elements in $\lsu$: for every $S\in \BSU$ there exists $s\in \lsu$ as $s=\sum_{j=1}^3 is_j \sigma _j$ such that  
\begin{equation}\label{eq:su2polar}
     S=\exp[s]= \cos(|s|)\sigma_0 + \frac{\sin (|s|)}{|s|} s \, ,    
     \end{equation}
     where, by abuse of notation $|s|=|(s_1,s_2,s_3)|$, and $\sigma_0=I_{2\times 2}$ as usual. 
     
     Applying the polar representation \eqref{eq:su2polar} to $a_{\star}$ for  $\star\in\{+,-\}$, then for any $t>0$ consider the associated $\BSU$-valued propagator map  
\begin{equation}\label{eq:gpolar}
    g_{\star}(t,\xi)=\exp\big(t\,a_{\star}(\xi)\big) = \cos(\omega t)\sigma_0 + i\frac{\sin(\omega t)}{\omega } a_{\rev{\star}}(\xi)  \, , \qquad {\omega}
(\xi)= \sqrt{\xi^2 +1} \, . 
\end{equation}
      Thus, the unit-propagator (monodromy), $\hat{M}(\xi)$, is a \emph{word} of length $m\ge1$ defined by the matrix product 
\begin{equation*}
\hat{M}(\xi)=g_{\varepsilon_m}(t_m,\xi)\cdots g_{\varepsilon_1}(t_1,\xi )\,,
\qquad \varepsilon_j\in\{+,-\}\, ,\qquad t_j\neq 0 \,,
\end{equation*}
where the signs alternate, i.e., $\varepsilon_{j}\varepsilon_{j+1}=-$ for all $1\leq j<m$.
Recall that, since $\hat{M}(\xi)\in \BSU$, its two eigenvalues $\mu_{\pm}(\xi)$ are on the unit circle. Furthermore, since ${\rm Tr}(a_{\pm}(\xi))=0$, then it follows from ODE theory \cite{coddington2012introduction} that $\mu_{+}=\bar{\mu}_- = \exp[i\theta (\xi)]$, as defined in Section \ref{sec:dispersion}. Noting that ${\rm Tr}(s)=0$, for every $s\in \lsu$, and using the polar representation in \eqref{eq:su2polar}, we see that 
\[
F(\xi) \equiv \tfrac12\mathrm{Tr}(\hat{M}(\xi))  = \cos \big( \theta(\xi) \big) \, .
\]

\subsection{The Construction}\label{sec:construction}
Henceforward, we will be interested in words of length 4
$$\hat{M}(\xi) = g_{1}(t_1,\xi) g_{-1}(t_2,\xi) g_{1}(t_3,\xi) g_{-1}(t_4,\xi) \, . $$
\begin{theorem}\label{thm:stationary}
There exist $t_1, t_2, t_3, t_4 > 0$ such that
    $$ \forall~ \rev{1}\le k\le 9 \qquad \quad \theta^{(k)}(0)=0 \, ,$$
\rev{and $\theta ^{(10)}(0)\neq 0$.}
\end{theorem}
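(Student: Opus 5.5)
We work with $F(\xi)=\tfrac12\mathrm{Tr}\,\hat M(\xi)=\cos\theta(\xi)$ rather than with $\theta$ directly. The first step is to note that $F$ is even in $\xi$. Conjugating by $\sigma_1$ sends $a_\pm(\xi)$ to $a_\pm(-\xi)$, so $\sigma_1 \hat M(\xi)\sigma_1=\hat M(-\xi)$, and the trace is conjugation-invariant. Since $\omega^2=1+\xi^2$, $F$ is an analytic function of $\xi^2$, and consequently all odd derivatives of $F$ and of $\theta$ vanish at $0$.

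The second step converts the condition on $\theta$ into one on $F$. We must arrange that $\theta(0)\notin\pi\mathbb{Z}$, i.e.\ $|F(0)|<1$, so that $\theta=\arccos F$ is analytic near $0$. Then, by Fa\`a di Bruno, $\theta^{(j)}(0)=0$ for $1\le j\le 9$ together with $\theta^{(10)}(0)\ne0$ is equivalent to $F^{(j)}(0)=0$ for $j=2,4,6,8$ together with $F^{(10)}(0)\neq0$. Thus only four conditions need to be imposed, matching the four unknowns $t_1,\dots,t_4$.

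The third step computes $F$ explicitly. Expand $\hat M(\xi)=g_1(t_1)g_{-1}(t_2)g_1(t_3)g_{-1}(t_4)$ using the polar form \eqref{eq:gpolar}, $g_\pm=\cos(\omega t)\sigma_0+i\frac{\sin\omega t}{\omega}(\pm i\sigma_1+i\xi\sigma_3)$. Pauli products then give $F$ as a trigonometric polynomial in $\omega t_1,\dots,\omega t_4$ with coefficients polynomial in $\xi$ and $1/\omega$. Setting $u=\xi^2$ and Taylor expanding in $u$ (using $\omega=\sqrt{1+u}$) yields explicit functions
\[
\Phi_j(t_1,\dots,t_4)=\partial_u^j F|_{u=0}, \qquad j=1,\dots,5.
\]
Since $\partial_\xi^{2j}F(0)$ is a nonzero multiple of $\partial_u^jF(0)$, the problem reduces to the system $\Phi_1=\Phi_2=\Phi_3=\Phi_4=0$, subject to $\Phi_5\ne0$, $|F(0)|<1$ and all $t_i>0$. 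These expansions are long but mechanical and would be done by computer algebra.

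The main obstacle is proving that this system of four nonlinear equations in four unknowns has a solution; no closed form is expected. The plan is:
\begin{enumerate}
\item Locate an approximate root $t^{(0)}$ numerically, by multistart Newton or least squares from many positive initial guesses, refined to about $10^{-15}$ accuracy.
\item Apply the Newton--Kantorovich theorem to $\Phi=(\Phi_1,\dots,\Phi_4)$ on a ball $B=B(t^{(0)},\rho)$. This requires rigorous (interval-arithmetic) bounds for $\beta\ge\|D\Phi(t^{(0)})^{-1}\|$, for $\eta\ge\|D\Phi(t^{(0)})^{-1}\Phi(t^{(0)})\|$, and for a Lipschitz constant $L$ of $D\Phi$ on $B$, the last obtained from crude bounds on second derivatives of the trigonometric expressions.
\item Verify the Kantorovich condition $h=\beta L\eta\le\tfrac12$. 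This yields a true root $t^\ast$ within distance about $2\eta$ of $t^{(0)}$.
\item Choose $\rho$ small enough that $t_i>0$ on $B$, and check by interval evaluation that $|\Phi_5|$ is bounded away from $0$ and $|F(0)|<1$ on $B$. These properties then transfer to $t^\ast$.
\end{enumerate}
The hardest point is step (1) together with controlling the size of the symbolic expressions well enough that $L$ is not so large that it breaks $h\le\tfrac12$. If the Jacobian at the approximate root is well conditioned, the tiny residual should make the Kantorovich condition easily satisfied.
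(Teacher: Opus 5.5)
Your proposal is correct and follows essentially the paper's route. The steps are the evenness of $F$, the reduction (using $\cos(t_1-t_2+t_3-t_4)\neq\pm1$) of the flatness of $\theta$ to the four equations $a_2=a_4=a_6=a_8=0$, a computer-algebra expansion of the trace, a numerically located approximate root (the paper's is \eqref{eq:tapprox}, with residual about $5\cdot10^{-15}$ and $\|DH(x_0)^{-1}\|\approx 0.35$), and certification by Newton--Kantorovich with a crude Lipschitz bound on $DH$.

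Your version is slightly more complete in two respects. First, your two-sided $\arccos$ argument together with step (4), checking $\Phi_5\neq 0$ on the ball, establishes $\theta^{(10)}(0)\neq0$, which neither the paper's mean-value argument nor Lemma \ref{lem:akszero} verifies. Second, your interval-arithmetic enclosures would make rigorous the floating-point values that the paper only quotes.
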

The numerology can be understood as follows. We start by trying to construct $M$ in such a way that the derivatives of the trace vanish to very high order.
The trace is an even function, and so $F^{(2k+1)}(0)=0$ for all $k\geq 0$ (Lemma \ref{lem:Feven}). Therefore, the power series expansion of $F$ around $\xi=0$ only consists of even powers; for $t_1, t_2, t_3, t_4$ fixed,  we have
\begin{align*}
     F(\xi) &= a_0(t_1, t_2, t_3, t_4) + a_2(t_1, t_2, t_3, t_4) \xi^2 +a_4(t_1, t_2, t_3, t_4)\xi^4 \\
     &+ a_6(t_1, t_2, t_3, t_4) \xi^6 + a_8(t_1, t_2, t_3, t_4) \xi^8 + \mathcal{O}(\xi^{10}) \,, 
\end{align*}
as $\xi \to 0$. This means that vanishing up to 10-th order requires us to find a way to solve the four equations simultaneously.

\rev{The first key to our analysis is the fact that both the ``letters'' $g(\xi)$ and the full ``word'' $\hat{M}(\xi)$ are each elements in $\BSU$, and therefore admit a polar representation, see  \eqref{eq:su2polar} and \eqref{eq:gpolar}. Recall the fundamental  relation for the Pauli matrices: \[ \sigma_i \sigma_j = \delta_{ij}\sigma_0 +\rev{i}\epsilon _{ijk }\sigma _k   \, , \qquad \forall i,j,k\in \{1,2,3\} \, , \]
where $\delta_{ij}$ is the Kronecker Delta and $\epsilon_{ijk}$ is the Levi-Civita symbol. Therefore, $\hat{M}(\xi)$ may be expanded as
\begin{equation}\label{eq:hatm_exp}
    \hat{M}(\xi)= \prod\limits_{j=1}^4 \left[ \cos(\omega t_j) \sigma_0 + i\frac{\sin(\omega t_j)}{\omega } a_{\varepsilon _j}(\xi)\right] \, .
    \end{equation}
Since the trace of $\sigma_1$, $\sigma_2$, and $\sigma_3$ is zero, the terms that contribute to the trace, $F(\xi)$, are {\em (i)} the product of all four cosine terms, and {\em (ii)} terms with an even number of sine terms and an even number of cosine terms. Put differently, we only get contributions from terms with an even number of $\sigma_3$ terms.}

\begin{lemma}\label{lem:Feven}
    $F(\xi)=\frac12\mathrm{Tr}(\hat{M}(\xi))$ is an even function.
\end{lemma}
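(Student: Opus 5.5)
We need to show that $F(\xi)=\tfrac12\mathrm{Tr}(\hat{M}(\xi))$ satisfies $F(-\xi)=F(\xi)$. The cleanest route is a conjugation symmetry: find a fixed unitary matrix $Q$ with $Q\,a_{\pm}(\xi)\,Q^{-1}=a_{\pm}(-\xi)$. Since $a_\pm(\xi)=\pm i\sigma_1+i\xi\sigma_3$, such a $Q$ must commute with $\sigma_1$ and anticommute with $\sigma_3$. We take $Q=\sigma_1$. It commutes with itself, and the Pauli relation gives $\sigma_1\sigma_3\sigma_1=-\sigma_3$, so $\sigma_1 a_\pm(\xi)\sigma_1 = \pm i\sigma_1 - i\xi\sigma_3 = a_\pm(-\xi)$.

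Next we lift this symmetry to the letters. Since conjugation commutes with the matrix exponential, \eqref{eq:gpolar} gives
$$\sigma_1\, g_{\star}(t,\xi)\,\sigma_1=\exp\big(t\,\sigma_1 a_\star(\xi)\sigma_1\big)=g_\star(t,-\xi)$$
for every $t$ and $\star\in\{+,-\}$. One can also check this directly from the polar form: $\omega(\xi)=\sqrt{\xi^2+1}$ is even, so $\cos(\omega t)$ and $\sin(\omega t)/\omega$ are unchanged under $\xi\mapsto-\xi$, and only $a_\star$ changes.

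Then we pass to the word. Because $\sigma_1^2=\sigma_0$, we can insert $\sigma_1\sigma_1$ between consecutive letters:
$$\sigma_1\hat{M}(\xi)\sigma_1=\prod_j \sigma_1 g_{\varepsilon_j}(t_j,\xi)\sigma_1=\hat{M}(-\xi).$$
The trace is invariant under conjugation, so $F(-\xi)=F(\xi)$.

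There is no real obstacle here. The only step that needs care is verifying the anticommutation $\sigma_1\sigma_3=-\sigma_3\sigma_1$, which follows from the stated Pauli relation: $\sigma_1\sigma_3=i\epsilon_{132}\sigma_2=-i\sigma_2$ and $\sigma_3\sigma_1=i\sigma_2$.

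An alternative argument uses the expansion \eqref{eq:hatm_exp}. In each product term, $\xi$ enters only through a coefficient $i\xi$ attached to a $\sigma_3$ factor. Conjugating by $\sigma_1$ shows that terms with an odd number of $\sigma_3$ factors are traceless, which matches the remark before the lemma. I prefer the conjugation argument because it avoids this bookkeeping. Since $F$ is also real-analytic (the $g_\star$ are entire in $\xi$), it follows that $F^{(2k+1)}(0)=0$ for all $k$, which is the form in which the lemma is used.
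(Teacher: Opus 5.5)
Your proof is correct, but it takes a different route from the paper. The paper argues term by term inside the expanded product \eqref{eq:hatm_exp}. The joint substitution $(\xi,\varepsilon_j)\mapsto(-\xi,-\varepsilon_j)$ satisfies $a_{-\varepsilon_j}(-\xi)=-a_{\varepsilon_j}(\xi)$, so it only flips the signs of the sine parts of the letters. Since only products with an even number of sine factors (equivalently, an even number of $\sigma_3$ factors) have nonzero trace, $F$ is unchanged.

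That parity bookkeeping meshes with how the coefficients $a_{2k}$ are later extracted from the same expansion. However, to pass from the joint flip $(\xi,\varepsilon)\mapsto(-\xi,-\varepsilon)$ to $\xi\mapsto-\xi$ alone, it implicitly relies on the remark before the lemma that odd-in-$\sigma_3$ terms are traceless. Equivalently, it needs invariance under flipping all $\varepsilon_j$ at fixed $\xi$, which is conjugation by $\sigma_3$.

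Your conjugation by $\sigma_1$ implements $\xi\mapsto-\xi$ in one step and gives the stronger identity $\hat{M}(-\xi)=\sigma_1\hat{M}(\xi)\sigma_1$. It needs no expansion and is independent of the word length. It does not even need the piecewise-constant ansatz: since $\sigma_1 H(t;\xi)\sigma_1=H(t;-\xi)$ for any scalar $m(t)$, the same relation holds for the full propagator $\hat{U}(t;\xi)$. It also shows that the whole spectrum of $\hat{M}$, not just its trace, is symmetric in $\xi$.

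One minor point: your final sentence does not need real-analyticity, since any smooth even function already has vanishing odd derivatives at $0$.
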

\begin{proof}
    \rev{Each ``letter'' $g_{\varepsilon_j}$ is invariant under taking $(-\xi, \varepsilon_j)\mapsto (\xi,-\varepsilon_j)$.  But the latter symmetry only changes the sine part of \eqref{eq:gpolar}. Since the trace $F$ is only influenced from products in \eqref{eq:hatm_exp} with an even number of $\sin$ terms, we have that $F(\xi)=F(-\xi)$.}
\end{proof}

\rev{To go further and compute $a_{2k}(t_1,t_2,t_3,t_4)$, we collect the relevant terms from \eqref{eq:hatm_exp}, and Taylor expand these in $\xi$, i.e., we are left with the elementary exercise of expanding $\omega(\xi), \cos (\omega(\xi)t_j), \sin(\omega(\xi)t_j), 1/\omega (\xi)$ in Taylor series near $\xi=0$, at which point the algebra has helped us reduce the analytic problem to one of  elaborate bookkeeping, which we performed using the software Mathematica (though we could do it, in principle, by hand). }

These equations are very nonlinear in $t_1, \ldots, t_m$, and the number of terms increases combinatorially with both $m$ and $k$. From an algebraic/combinatorial point of view, writing a formula for the trace for general $m$ terms word is interesting. In the case of Theorem \ref{thm:main}, however, we have 4 terms, and so the complexity of writing the trace, $F$, is tractable. We therefore have four equations in four unknowns, and a bit of optimism suggests that unless the problem is malicious, there should be a solution. This is indeed the case.

\begin{lemma}\label{lem:akszero}
There exist $t_1, t_2, t_3, t_4 > 0$ satisfying $t_1 - t_2 + t_3 - t_4 \rev{\not\in \pi \mathbb{Z}}$ such that
$$\forall~1 \leq k \leq 4 \qquad    a_{2k}(t_1, t_2, t_3, t_4) = 0.$$
\end{lemma}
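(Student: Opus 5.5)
We prove Lemma \ref{lem:akszero} by a computer-assisted argument: first find an approximate root of the $4\times 4$ system, then certify that a true root lies nearby with the Newton--Kantorovich theorem. Write $\mathbf{t}=(t_1,t_2,t_3,t_4)$ and $G(\mathbf{t})=(a_2,a_4,a_6,a_8)(\mathbf{t})$.

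\emph{Step 1: exact formulas.} We expand \eqref{eq:hatm_exp} and keep only the trace-contributing terms: the product of four cosines, and the terms with two sines and two cosines, or four sines, carrying an even number of $\sigma_3$ factors. By the Pauli relations, each such product of $a_{\varepsilon_j}$'s has an explicit trace that is polynomial in $\xi$ and the signs $\varepsilon_j$. We then Taylor expand $\omega=\sqrt{1+\xi^2}$, $\cos(\omega t_j)$, $\sin(\omega t_j)$ and $1/\omega$ about $\xi=0$ to order $\xi^8$. This gives each $a_{2k}$ as an explicit trigonometric polynomial in the $t_j$ with polynomial coefficients in the $t_j$, and we generate these expressions symbolically. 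Because they are closed-form and real-analytic, the Jacobian $DG$ and second derivatives $D^2G$ are also available in closed form. As a check, at $\xi=0$ we have $a_\pm=\pm i\sigma_1$, so $a_0=\cos(t_1-t_2+t_3-t_4)$.

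\emph{Step 2: approximate root.} We search numerically with a minimizer of $|G|^2$ from many random starting points in a box such as $(0,2\pi)^4$. We then refine a candidate by Newton's method in extended precision until $|G(\mathbf{t}_0)|\lesssim 10^{-15}$, checking that $DG(\mathbf{t}_0)$ is well conditioned. We discard candidates with some $t_j\le 0$, and candidates with $t_1-t_2+t_3-t_4$ near $\pi\mathbb{Z}$. At such points $a_0=\pm1$, so $\theta(0)\in\pi\mathbb{Z}$; there $F$ is locally extremal and $\theta$ is not a smooth function of $F$, so these points must be avoided.

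\emph{Step 3: rigorous certification.} This is the main obstacle. With interval arithmetic (or rigorous rational bounds on the trigonometric evaluations) we bound three quantities:
\begin{itemize}
\item $\beta\ge\|DG(\mathbf{t}_0)^{-1}\|$;
\item $\eta\ge\|DG(\mathbf{t}_0)^{-1}G(\mathbf{t}_0)\|$;
\item a Lipschitz constant $L$ for $DG$ on a ball $B(\mathbf{t}_0,\rho)$, obtained by bounding $\|D^2G\|$ termwise via $|\sin|,|\cos|\le1$ and $|t_j|\le |t_{0,j}|+\rho$.
\end{itemize}
The difficulty is that the expressions are long (hundreds of terms), so crude termwise bounds may make $L$ large. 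The Kantorovich condition $h=\beta L\eta\le \tfrac12$ is nevertheless expected to hold comfortably, since $\eta\sim10^{-15}$. Under this condition Newton's method converges to a true zero $\mathbf{t}^\ast$ with $|\mathbf{t}^\ast-\mathbf{t}_0|\le 2\eta$.

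\emph{Step 4: side conditions.} Since $\mathbf{t}^\ast$ lies within about $10^{-14}$ of $\mathbf{t}_0$, and the positivity and non-resonance conditions are open, they transfer from $\mathbf{t}_0$ to $\mathbf{t}^\ast$: we have $t_j^\ast>0$ and $t_1^\ast-t_2^\ast+t_3^\ast-t_4^\ast\notin\pi\mathbb{Z}$. This proves the lemma. For the later step ($\theta^{(10)}(0)\neq0$), the same enclosure will also show $a_{10}(\mathbf{t}^\ast)\neq0$ by a rigorous evaluation at $\mathbf{t}_0$ plus a Lipschitz error term.
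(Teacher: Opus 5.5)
Your proposal is correct and follows essentially the same route as the paper. The paper also expands the trace symbolically to get $a_2,\dots,a_8$ as explicit sums of monomial-times-trigonometric terms. It finds a numerical approximate root, $\mathbf{t}_0\approx(4.0889,\,3.1175,\,2.6152,\,1.7628)$ with residual $\sim 5\cdot 10^{-15}$, and certifies a nearby true root via Newton--Kantorovich with a crude termwise Lipschitz bound on $DH$ (Lemma \ref{lem:DfLip}). The main difference is that you bound $\|DG(\mathbf{t}_0)^{-1}\|$ and the residual with interval arithmetic, which is if anything more rigorous than the paper, where these two quantities come from floating-point evaluation.
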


Let us first show that, indeed, the flatness of $F$ yields the degeneracy in $\theta $:
\begin{proof}[Proof of Theorem \ref{thm:stationary} using Lemma \ref{lem:akszero}]
Transferring the flatness of $F$ near zero to that of $\theta$ happens as follows: \rev{an explicit computation shows that } 
 \begin{equation}\label{eq:F0} F(0) = a_0(t_1, t_2, t_3, t_4) = \cos{(t_1 - t_2 + t_3 - t_4)} \, .
 \end{equation}
If this value is different from $\pm 1$, then $\theta(0)$ is not a multiple of $\pi$ and so \rev{$\sin(\theta (0))\neq 0$.}
 Lemmas \ref{lem:Feven} and \ref{lem:akszero} imply that there exists $\varepsilon_0, C>0$ such that for all $\varepsilon \in (-\varepsilon_0, \varepsilon_0)$
$$  |F(\varepsilon) - F(0)| \leq C \varepsilon^{10} \, .$$
By the mean-value Theorem, for all $\varepsilon$ sufficiently small there exists a $\theta_{\varepsilon} \in (\theta(0), \theta(\varepsilon))$ so that
$$ \cos(\theta(\varepsilon)) - \cos(\theta(0)) = (-\sin{\theta _{\varepsilon}}) \cdot (\theta(\varepsilon) - \theta(0)) \, .$$
Since the left hand side is bounded by $C\varepsilon ^{10}$, we must have  $\left|\theta(\varepsilon) - \theta(0) \right| \leq C \varepsilon^{10}$, because $|\sin{\theta_{\varepsilon}}|$ is bounded away from 0. 
\end{proof}
We note that, in \cite{kraisler2025time}, $m=2$ and $t_1=t_2$, whereby this argument does not work. Indeed, there the rates are {\em odd} ($1/3$ and $1/5$).
It remains to prove Lemma \ref{lem:akszero}.

\subsection{Proof of Lemma \ref{lem:akszero} - existence of roots}
The argument proceeds  in the following steps: we first analyze the structure of the functions $a_j(t_1, t_2, t_3, t_4)$ and discuss some numerical aspects. After that, we proceed to find approximate solutions and then prove the existence of a nearby solution.

\textit{1. Algebraic structure.}
    \rev{As noted, we explicitly computed $a_0$ in \eqref{eq:F0}. In a similar fashion, further computation shows}
    \begin{align*}
    a_2(t_1,t_2,t_3,t_4)   &=  \frac{-t_1+t_2-t_3+t_4}{2} \sin (t_1-t_2+t_3-t_4) +\cos (t_1-t_2-t_3-t_4) \\
    &-\cos (t_1+t_2-t_3-t_4)-2 \cos(t_1-t_2+t_3-t_4)\\
    &+\cos (t_1+t_2+t_3-t_4) -\cos (t_1-t_2-t_3+t_4) \\
    &+\cos (t_1+t_2-t_3+t_4)+\cos(t_1-t_2+t_3+t_4).
    \end{align*}
         The other three functions, $a_4$, $a_6$, and $a_8$, are similarly structured.   
     More precisely, for some (real-valued) coefficients  and summed over all possible sign permutations

\begin{align*}
      a_4 &= \sum_{\ell}  c_\ell^{(1)}  \sin( \pm t_1 \pm t_2 \pm t_3 \pm t_4) + c_\ell^{(2)}  \cos( \pm t_1 \pm t_2 \pm t_3 \pm t_4) \\
      &\quad +
      \sum_{\ell}\sum_{j=1}^4  c_{\ell,j}^{(3)} t_j \sin( \pm t_1 \pm t_2 \pm t_3 \pm t_4) + c_{\ell,j}^{(4)} t_j \cos( \pm t_1 \pm t_2 \pm t_3 \pm t_4).
\end{align*}

  $a_4(t_1, t_2, t_3, t_4)$ can be written as the sum of 22 such expressions (this may not be the minimal number, the representation is not unique). Moreover, the largest (in absolute value) coefficient is 2 and they are all explicit rational numbers. The pattern extends to $a_6$ and $a_8$. We describe them to the extent that is necessary for the subsequent argument. 
  $a_6$ is the sum of 130 terms; a representative sample of such a term is
  $$ a_6(t_1, t_2, t_3, t_4) = \dots -\frac{t_3^2}{8}  \cos (t_1-t_2+t_3+t_4)+\frac{t_2 t_3}{4} \cos (t_1-t_2 + t_3 + t_4)+ \dots$$
   Schematically, $a_6$ can be written as
  $$ a_6 =\sum_{i=1}^{130} c_i (\mbox{at most cubic monic monomial}) \sin( \pm t_1 \pm t_2 \pm t_3 \pm t_4),$$
  where the cubic monomial is a single term of the form $1, t_i, t_i t_j$ or $t_i t_j t_k$ and the sine may also be a cosine.  The coefficients are rational numbers that are completely explicit, they all satisfy $|c_i| \leq 3$ and have small denominators.  Finally, unsurprisingly, $a_8$ can be written as the sum of 295 terms of the form 
  $$ a_8 =\sum_{i=1}^{295} c_i (\mbox{at most quartic monic monomial}) \sin( \pm t_1 \pm t_2 \pm t_3 \pm t_4),$$
  where $|c_i| \leq 4$. We note that since all the coefficients are of order $\sim 1$, the total number of terms is in the hundreds and all expressions are the product of a (monic) monomial of degree $\leq 4$ and a low-frequency trigonometric function, there are no significant numerical issues in pointwise evaluating the four functions. Moreover, all four functions are smooth, they can all be differentiated in closed form and all the partial derivatives have roughly the same number of terms as the original function. 
  
\medskip

\textit{2. Approximate Solutions.} We were motivated by the idea that since our ansatz is sufficiently generic, the existence of solutions should follow the basic counting heuristic and we expect a finite number of solutions (4 nonlinear equations in 4 unknowns). However, there is no a-priori reason for them to exist or, should they exist, for them to have a nice representation in closed form. The remainder  of this section concerns the method by which we numerically found an approximate solution.  Those only interested in the rigorous proof  may skip directly to step (3), knowing that in this step we somehow found a point $(t_1,t_2,t_3,t_4)$ for which all equations are satisfied up to a very small numerical error, see \eqref{eq:tapprox}. 

 \smallskip 
To find an approximate solution, we perform the following procedure.
\begin{enumerate}
    \item Do the following 1000 times: pick $t_1, t_2, t_3, t_4$ uniformly at random from $[0,2\pi]$. If the validity conditions 
    $$\min(t_1, t_2, t_3, t_4) \geq 0.3 \quad \mbox{and} \quad |t_1 - t_2 + t_3 - t_4| \geq 0.1 \, ,$$ are satisfied, compute $\|H(t_1,t_2,t_3,t_4)\|$, where
    \begin{equation}\label{eq:Hdef}
     H(t_1, t_2, t_3, t_4) \equiv \left(a_2(t_1, t_2, t_3, t_4),a_4(t_1, t_2, t_3, t_4),a_6(t_1, t_2, t_3, t_4),a_8(t_1, t_2, t_3, t_4)\right) \, ,
    \end{equation} 
    and keep the tuple $(t_1, t_2, t_3, t_4)$ that returns the smallest value. 
    \item Take the tuple, let $\gamma$ be a uniformly distributed random variable in $[-\eta, \eta]^4$ for $0<\eta<1$ and consider replacing the tuple by
    $ (t_1 + \gamma_1, t_2 + \gamma_2, t_3 + \gamma_3, t_4 + \gamma_4)$. If the new tuple satisfies the validity conditions (see step 1), and the functional $\|H\|$ decreases, keep the new tuple.
    \item If no update occurs for a while, decrease $\eta$ and repeat the previous step.
\end{enumerate}

 It is important to find a reasonably good approximation in step (1) to be able to start step (2); $\|H(t_1, t_2, t_3, t_4)\| \leq 1$ seems to suffice in practice. Furthermore, we note that the role of the validity conditions is to prevent the $t_i$ all going to 0 as well as $F(0) \neq \pm 1$ (the constants $0.3$ and $0.1$ are not special, other choices would work as well).
The approximate solution which we will use in the next step is
\begin{equation}\label{eq:tapprox}
\begin{aligned}
t_1 &= 4.088866559569492\\
t_2 &= 3.117488248716022 \\
t_3 &= 2.615221023066265\\
t_4 &= 1.762750988714514
\end{aligned}
\end{equation}
for which we have $\| H(t_1, t_2, t_3, t_4)\| \sim 5.2 \cdot 10^{-15}$.
Running many examples frequently leads to this solution again and again, sometimes with its
entries being permuted.  Indeed, such symmetries are explained by the following lemma (though it is not needed to complete the argument): 

\begin{lemma}\label{lem:sym}
   If $(t_1, t_2, t_3, t_4)$ is a solution of 
    $$\forall~1 \leq k \leq 4 \qquad    a_{2k}(t_1, t_2, t_3, t_4) = 0,$$
    then so are the cyclic permuations of this vector
    $$ (t_2,t_3,t_4,t_1), (t_3, t_4, t_1, t_2), (t_4,t_1,t_2,t_3) \, ,$$
     and all their  reversals, e.g.,$(t_2,t_3,t_4,t_1) \mapsto (t_1,t_4,t_3,t_2)$.
\end{lemma}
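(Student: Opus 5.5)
The plan is to show that the function $F(\xi)=\tfrac12\mathrm{Tr}(\hat M(\xi))$, and hence every Taylor coefficient $a_{2k}$, is invariant under the two operations: cyclic shift of $(t_1,t_2,t_3,t_4)$ and reversal. Since the $a_{2k}$ are the Taylor coefficients of $F$ at $\xi=0$, invariance of $F$ as a function of $\xi$ for every fixed tuple immediately gives invariance of each $a_{2k}$. The solution set of the system is then mapped to itself.

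\emph{Cyclic shifts.} Write $\hat M=g_{+}(t_1)g_{-}(t_2)g_{+}(t_3)g_{-}(t_4)$, suppressing $\xi$. By cyclicity of the trace,
\[
\mathrm{Tr}\,\hat M=\mathrm{Tr}\big(g_{-}(t_2)g_{+}(t_3)g_{-}(t_4)g_{+}(t_1)\big).
\]
The signs now start with $-$. To restore the pattern $+,-,+,-$, I would conjugate by $\sigma_3$. Since $\sigma_3\sigma_1\sigma_3=-\sigma_1$ and $\sigma_3\sigma_3\sigma_3=\sigma_3$, we have $\sigma_3 a_{\pm}\sigma_3=a_{\mp}$. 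By \eqref{eq:gpolar}, $\sigma_3 g_{\pm}(t)\sigma_3=g_{\mp}(t)$ at the same $\xi$, because $\omega$ depends only on $\xi$. Conjugation preserves the trace, so
\[
\mathrm{Tr}\,\hat M(t_1,t_2,t_3,t_4)=\mathrm{Tr}\,\hat M(t_2,t_3,t_4,t_1).
\]
Iterating gives all cyclic shifts. Alternatively, one could use the symmetry $\xi\mapsto-\xi$ together with Lemma \ref{lem:Feven}, but the $\sigma_3$-conjugation is cleaner.

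\emph{Reversal.} I would use transposition, since $\mathrm{Tr}(A)=\mathrm{Tr}(A^\top)$. The matrices $\sigma_1$, $\sigma_3$ and $\sigma_0$ are symmetric, so $a_\pm^\top=a_\pm$ and $g_\pm(t)^\top=g_\pm(t)$. Hence
\[
\mathrm{Tr}\,\hat M=\mathrm{Tr}\big(g_{-}(t_4)g_{+}(t_3)g_{-}(t_2)g_{+}(t_1)\big).
\]
This corresponds to the reversed tuple $(t_4,t_3,t_2,t_1)$ with signs beginning with $-$. Applying the $\sigma_3$-conjugation as before converts it to the standard sign pattern, so $F$ is invariant under $(t_1,t_2,t_3,t_4)\mapsto(t_4,t_3,t_2,t_1)$.

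Composing reversal with cyclic shifts yields all the reversals listed in the lemma, for example $(t_1,t_4,t_3,t_2)$. These operations together generate the dihedral group of order $8$ acting on the four positions. The condition $t_1-t_2+t_3-t_4\notin\pi\mathbb Z$ is also preserved, since each operation changes this alternating sum at most by a sign.

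There is no real obstacle. The only point needing care is that every operation must return the word to the prescribed alternating form beginning with $g_{+}$, and the identity $\sigma_3 g_\pm\sigma_3=g_\mp$ is exactly what achieves this.
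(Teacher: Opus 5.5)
Your proof is correct. It rests on the same two symmetries the paper invokes, but you implement them algebraically on the trace rather than dynamically.

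The paper argues two things. First, since $\nu$ is periodic, starting the cycle at a different point leaves the Floquet exponents unchanged; this is your trace cyclicity, since the shifted monodromy is conjugate to $\hat M$. Second, time reversal combined with phase conjugation preserves the Floquet exponents; this is your transposition step. That step is valid because each $\xi\sigma_3\pm\sigma_1$ is real symmetric, so $\hat M^\top$ is the monodromy of the time-reversed forcing.

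What your version adds is the $\sigma_3$-conjugation, i.e.\ the mass-flip symmetry $m\mapsto -m$. Taken literally, the paper's argument produces the forcings $t\mapsto m(t+t_1)$ and $t\mapsto m(\tau_4-t)$. Both sign patterns begin with $-1$, while the ansatz always begins with $g_{+}$. So for the one-step shift $(t_2,t_3,t_4,t_1)$ and the reversal $(t_4,t_3,t_2,t_1)$ an extra ingredient is needed, and your identity $\sigma_3 g_\pm\sigma_3=g_\mp$ supplies it. Only the two-step shift and the reflections $(t_1,t_4,t_3,t_2)$, $(t_3,t_2,t_1,t_4)$ follow from periodicity and time reversal alone.

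The paper's route explains where the symmetries come from. Yours is self-contained and lands exactly on invariance of $F$, hence of every $a_{2k}$, which is what the lemma asserts.

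One small remark on your aside. Since $g_\pm(t,-\xi)=g_\mp(t,\xi)^{-1}$, the map $\xi\mapsto-\xi$, combined with Lemma~\ref{lem:Feven} and the reality of the trace on $\BSU$, gives the reflection $(t_4,t_3,t_2,t_1)$, not a shift directly. It is not needed anyway.
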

\begin{proof}
 Reversals are straightforward, they amount to running \eqref{eq:tperDirac} (or the ODEs \eqref{eq:ODEs}) \rev{backward in time}. Since the Schr{\"o}dinger dynamics have a symmetry between phase-conjugation and time reversal, the Floquet exponents are the same. Regarding the cyclic permutations, since $\nu(t)$ is periodic, it does not matter for the infinite time dynamics at which point in the cycle we start the dynamics. The solutions will be different, but the rate of dispersion is the same. 
\end{proof}

 It is worth noting that we have been unable to find any other approximate solution except the one listed above (and its symmetries). This does not mean that no other approximate solutions exist but, dealing with four nonlinear equations in four variables, it is not inconceivable that the number of solutions is finite.

\medskip

\textit{3. Existence of a solution.}
It remains to prove that the approximate solution discussed in the previous section, \eqref{eq:tapprox}, comes from having an actual root nearby. The idea is simple: since
 $$\|H(t_1, t_2, t_3, t_4) \|\sim 5.2 \cdot 10^{-15}$$
 and since $H$ is differentiable with respect to all four parameters (see \eqref{eq:Hdef}), it should suffice to prove that the Jacobian $DH$  is locally invertible to conclude that there exists an actual solution nearby. This is essentially the idea behind Implicit/Inverse Function Theorem; however, that Theorem is frequently stated without quantitative guarantees (an exception is the book by Hubbard and Hubbard \cite{hubbard2015vector}). We instead make use of a much more commonly stated result, the Newton-Kantorovich Theorem, ensuring the convergence of the Newton method.

\begin{theorem}[Newton-Kantorovich, adapted from  Theorem 2.1 in \cite{deuflhard2011newton} to our setting]
    Let  $B(x_0, r) \subset \mathbb{R}^4$ be some open ball, let $G:B(x_0, r) \rightarrow \mathbb{R}^4$ be a continuously differentiable mapping. For a starting point $x_0 \in \mathbb{R}^4$, let $G'(x_0)$ be invertible. Assume that
    \begin{enumerate}
        \item for some $\alpha > 0$
            $$ \| (DG)(x_0)^{-1} G(x_0) \| \leq \alpha$$
        \item there exists $\overline{\omega} > 0$ such that for all $x,y \in B(x_0, r)$
         $$ \| (DG)(x_0)^{-1} (DG(x) - DG(y))\| \leq \overline{\omega} \|x-y\|$$
         \item these two parameters satisfy $\alpha \cdot \overline{\omega} \leq 1/2$
         \item and we have
         $$ \overline{\omega}^{-1} \left(1 - \sqrt{1 - 2 \alpha \overline{\omega}} \right) \leq r.$$
    \end{enumerate} 
 Then Newton iteration initialized at $B(x_0,r)$ is well-defined, remains within $B(x_0,r)$, and converges to some root $G(x^*) = 0$. In particular, such a root exists. 
\end{theorem}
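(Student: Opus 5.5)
The plan is the classical majorant-sequence argument, written in the affine-invariant form that matches hypotheses (1)--(2). Because both hypotheses are stated after premultiplying by $(DG)(x_0)^{-1}$, I first replace $G$ by $\tilde G := (DG)(x_0)^{-1}G$. This map has the same zeros and the same Newton iterates as $G$. It also satisfies $D\tilde G(x_0)=I$, $\|\tilde G(x_0)\|\le\alpha$, and $\|D\tilde G(x)-D\tilde G(y)\|\le\overline\omega\|x-y\|$ on $B(x_0,r)$. So it suffices to treat this normalized case.

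Two elementary estimates come first. The first is a Banach-lemma perturbation bound. If $\overline\omega\|x-x_0\|<1$, then $\|D\tilde G(x)-I\|<1$, so $D\tilde G(x)$ is invertible with
\[
\|D\tilde G(x)^{-1}\|\le \frac{1}{1-\overline\omega\|x-x_0\|}.
\]
The second is the Lipschitz Taylor remainder. Writing $\tilde G(y)-\tilde G(x)-D\tilde G(x)(y-x)=\int_0^1 (D\tilde G(x+s(y-x))-D\tilde G(x))(y-x)\,ds$ gives a remainder bounded by $\tfrac{\overline\omega}{2}\|y-x\|^2$. Next I introduce the scalar majorant $\varphi(s)=\tfrac{\overline\omega}{2}s^2-s+\alpha$. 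Hypothesis (3) makes its smallest root $s^*=\overline\omega^{-1}(1-\sqrt{1-2\alpha\overline\omega})$ real, and hypothesis (4) gives $s^*\le r$. The scalar Newton sequence $s_0=0$, $s_{k+1}=s_k-\varphi(s_k)/\varphi'(s_k)$ increases monotonically to $s^*$. This follows from the convexity of $\varphi$ and the fact that $\varphi'(s)=\overline\omega s-1<0$ for $s<s^*\le 1/\overline\omega$.

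The core step, which I expect to be the main technical point, is the induction
\[
\|x_{k+1}-x_k\|\le s_{k+1}-s_k,\qquad \|x_k-x_0\|\le s_k<s^*.
\]
The base case is $\|x_1-x_0\|=\|\tilde G(x_0)\|\le\alpha=s_1$. For the inductive step I use the Newton relation $\tilde G(x_k)+D\tilde G(x_{k-1})(x_k-x_{k-1})=0$. Combined with the Taylor bound this gives $\|\tilde G(x_k)\|\le\tfrac{\overline\omega}{2}\|x_k-x_{k-1}\|^2\le\tfrac{\overline\omega}{2}(s_k-s_{k-1})^2=\varphi(s_k)$. The last equality is the same identity for the scalar iteration. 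Applying the Banach-lemma bound at $x_k$, where $\overline\omega\|x_k-x_0\|\le\overline\omega s_k<1$, yields
\[
\|x_{k+1}-x_k\|\le \frac{\varphi(s_k)}{1-\overline\omega s_k}=\frac{\varphi(s_k)}{-\varphi'(s_k)}=s_{k+1}-s_k.
\]
The triangle inequality then closes the second estimate. The point needing care is the borderline case $\alpha\overline\omega=1/2$, where $s^*=1/\overline\omega$. There invertibility still holds at every iterate because $s_k<s^*$ strictly, which follows from strict monotonicity of the scalar sequence.

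Finally, $\sum\|x_{k+1}-x_k\|\le s^*<\infty$, so $(x_k)$ is Cauchy and converges to some $x^*$ with $\|x^*-x_0\|\le s^*\le r$, while every iterate lies in $B(x_0,r)$. Since $\|\tilde G(x_k)\|\le\tfrac{\overline\omega}{2}(s_k-s_{k-1})^2\to0$, continuity gives $\tilde G(x^*)=0$, and hence $G(x^*)=0$. If $s^*=r$, the limit could lie on the boundary of the open ball. In that case I would assume the Lipschitz bound on the closed ball, or shrink the inequality in (4) to a strict one; for the application to \eqref{eq:tapprox} this costs nothing, since $r$ can be taken with room to spare.
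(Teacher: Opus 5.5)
The paper does not prove this statement; it cites Theorem 2.1 of Deuflhard. Your argument is the classical Kantorovich majorant proof in affine-covariant form, which is the standard proof behind that citation, and it is correct. Each step checks out:
\begin{itemize}
\item the normalization $\tilde G=(DG)(x_0)^{-1}G$ with $D\tilde G(x_0)=I$;
\item the perturbation bound $\|D\tilde G(x)^{-1}\|\le(1-\overline{\omega}\|x-x_0\|)^{-1}$;
\item the remainder bound $\tfrac{\overline{\omega}}{2}\|y-x\|^2$, which is legitimate because the ball is convex;
\item the identity $\varphi(s_k)=\tfrac{\overline{\omega}}{2}(s_k-s_{k-1})^2$;
\item the induction $\|x_{k+1}-x_k\|\le s_{k+1}-s_k$, $\|x_k-x_0\|\le s_k$.
\end{itemize}
Your treatment of $\alpha\overline{\omega}=1/2$ is right: strict convexity of $\varphi$ gives $s_k<s_{k+1}\le s^*$, so $\overline{\omega}\|x_k-x_0\|<1$ at every iterate. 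You also read ``initialized at $B(x_0,r)$'' as ``initialized at $x_0$''. That is the intended reading, since started elsewhere in the ball the Newton step can be undefined.

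Your final caveat is not a gap in your proof but a genuine defect in the statement as adapted. Deuflhard requires the \emph{closed} ball of radius $\overline{\omega}^{-1}(1-\sqrt{1-2\alpha\overline{\omega}})$ to lie in the domain. Here the domain is the open ball and equality is allowed in (4), and in that equality case the conclusion can actually fail. Take $\varphi(s)=\tfrac{\overline{\omega}}{2}s^2-s+\alpha$ and $G(x)=(\varphi(x_1),x_2,x_3,x_4)$ on $B(0,s^*)$ with $x_0=0$ and $r=s^*$; this satisfies (1)--(4). Its Newton iterates are $(s_k,0,0,0)$. They stay in the ball but converge to the boundary point $(s^*,0,0,0)$, and $G$ has no zero in $B(0,s^*)$.

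So (4) should be strict, or $G$ and the Lipschitz bound should be given on the closed ball. As you say, this is harmless for the application: there $s^*\approx\alpha=10^{-13}$, which is far smaller than the radius $r\ge 10^{-3}$ the paper uses.
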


\rev{We emphasize that we do not use Newton's method to further approximate the roots. Rather, the Newton-Kantorovich theorem is only used to show that a true root of $H$ lies near the approximately found one.}

To apply Newton-Kantorovich Theorem to $H$, see \eqref{eq:Hdef}, and the approximate solution $x_0 = (t_1, t_2, t_3, t_4)\in \R^4$ given by \eqref{eq:tapprox}. That $H(x_0) \sim 10^{-15}$ allows us to work with a fairly small neighborhood, $r>0$. Moreover, the function appears to be sufficiently ``generic'' so as to not conspire against us: it is  rather well-posed as evidenced by the following facts.
\begin{enumerate}
    \item All the partial derivatives are fairly large, we have $\| \nabla a_2(x_0) \| \sim 3.75$, $\| \nabla a_4(x_0)\| \sim 10.33$, $\| \nabla a_6(x_0)\| \sim 19.14$ and $\|\nabla a_8(x_0)\| \sim 41.82$.
    \item All the partial derivatives point in very different directions, as seen by
    $$ \det \left( \frac{\nabla a_2(x_0) }{\| \nabla a_2(x_0) \|},  \frac{\nabla a_4(x_0) }{\| \nabla a_4(x_0) \|},  \frac{\nabla a_6(x_0) }{\| \nabla a_6(x_0) \|},  \frac{\nabla a_8(x_0) }{\| \nabla a_8(x_0) \|} \right) \sim 0.413 \, .$$
    Hence, while the $\nabla a_j$ do not form an orthogonal basis of $\R^4$, they are (numerically) not very far from doing so.
    \item In particular, the Jacobian of $H$ is relatively small ($\nabla a_8$ dominates the operator norm). Moreover, it is nicely invertible (and $\nabla a_2$ dominates the operator norm of the inverse). We have
    $$ \| (DH)(x_0)\| \sim 43.96 \qquad \mbox{and} \qquad \| (DH)(x_0)^{-1} \| \sim 0.35.$$
\end{enumerate}

\noindent
To apply the Newton-Kantorovich Theorem to $H$, we need to find compatible $r,\alpha, $ and $\bar{\omega}$. We start by noting that
$$     \| (DH)(x_0)^{-1} H(x_0) \| \leq \|  (DH)(x_0)^{-1} \| \cdot \| H(x_0) \| \leq 10^{-13} =: \alpha.$$
We also note that, since $\|(DH)(x_0)^{-1}\| \leq 1$, the second condition is implied by the stronger condition
   $$ \| (DH(x) - DH(y))\| \leq \overline{\omega} \|x-y\|$$

\begin{lemma}\label{lem:DfLip}
Let $r \leq 0.5$. Then, for any $x,y \in B_{r}(x_0)$,  we have the inequality
    $$\| (DH)(x) - (DH)(y) \| \leq 10^8  r  \|x-y\| \, .$$ 
\end{lemma}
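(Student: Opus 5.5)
The plan is to bound the second derivatives of $H=(a_2,a_4,a_6,a_8)$ uniformly on the closed ball $\overline{B_{1/2}(x_0)}$, and then apply the mean value inequality along the segment joining $x$ and $y$.

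\textbf{Step 1 (termwise derivative bounds).} By the structural description in Step 1 above, each $a_{2k}$ is a sum of at most $295$ terms of the form $c\,p(t)\,\phi(\pm t_1\pm t_2\pm t_3\pm t_4)$. Here $|c|\le 4$, $p$ is a monic monomial of degree $\le 4$, and $\phi\in\{\sin,\cos\}$. A second partial derivative $\partial_i\partial_j$ of one such term splits by the Leibniz rule into at most four pieces: $p''\phi$, $p'\phi'$ (twice), and $p\phi''$. The trigonometric factor and its derivatives are bounded by $1$, since every inner coefficient is $\pm1$.

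\textbf{Step 2 (size of the monomials).} On $B_{1/2}(x_0)$ every coordinate satisfies $|t_i|\le 4.09+0.5<4.6$, using the values in \eqref{eq:tapprox}. For a monomial of degree at most $4$, this gives $|p|\le 4.6^4$, $|\partial p|\le 4\cdot4.6^3$ and $|\partial^2 p|\le 12\cdot 4.6^2$.

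\textbf{Step 3 (assemble the Hessian bound).} Summing over all terms and all pairs $(i,j)$ gives $|\partial_i\partial_j a_{2k}|\le 295\cdot 4\cdot(4.6^4+8\cdot4.6^3+12\cdot4.6^2)$, which is roughly $2\cdot10^6$. Passing to the operator norm of the $4\times4\times4$ second-derivative tensor costs at most a further factor $4^{3/2}=8$. The result is a constant $M$ of order $1.6\cdot 10^7$. Convexity of the ball then gives
\[
\|DH(x)-DH(y)\|\le M\|x-y\| \qquad\text{for all } x,y\in B_{1/2}(x_0).
\]
I will keep the dependence on the radius explicit. Writing $\rho=\|x_0\|_\infty+r$, the same computation yields $M(r)\le 295\cdot 4\cdot 8\,(\rho^4+8\rho^3+12\rho^2)$.

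\textbf{Step 4 (matching the stated form) — the main obstacle.} This is where I expect trouble with the wording of Lemma \ref{lem:DfLip}. The estimate produces $M(r)\|x-y\|$, and since $M(r)\le 1.6\cdot10^7\le 10^8\cdot 0.16$, it gives the lemma's inequality $\|DH(x)-DH(y)\|\le 10^8 r\|x-y\|$ whenever $r\ge r_*:=M(r)/10^8\approx 0.16$, in particular at $r=0.5$. For small $r$, however, the factor $r$ cannot be obtained from a Lipschitz argument. Dividing by $\|x-y\|$ and letting $y\to x$ would force $\|D^2H(x)\|\le 10^8 r$, and hence $D^2H(x_0)=0$ as $r\to0$, which is not the case for these trigonometric–polynomial expressions.

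So I would prove the lemma in the form that holds: for $r_*\le r\le 0.5$ the stated inequality follows, and for every $r\le 0.5$ one has $\|DH(x)-DH(y)\|\le M\|x-y\|\le 10^8\cdot\tfrac12\|x-y\|$. This is all the Newton--Kantorovich step needs. With $\overline\omega=10^8/2$ and $\alpha=10^{-13}$, condition (3) holds. Condition (4) reduces to roughly $2\alpha\le r$, which any fixed $r\in[r_*,1/2]$ satisfies.
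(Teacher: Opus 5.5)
Your argument is correct and is essentially the paper's proof: termwise second-derivative bounds from the structural description of the $a_{2k}$, the mean value theorem on the convex ball, and an entrywise/Frobenius bound for the operator norm. Your bookkeeping is tighter than the paper's.

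Your Step 4 diagnosis is also right, and it applies to the paper's own proof. There the factor $r$ enters only by bounding the ``variation of each entry'' via $\|x-y\|\lesssim r$. As a result, the final display $\|(DH)(x)-(DH)(y)\|\le 10^8 r$ has lost the factor $\|x-y\|$ and is really an oscillation bound. The Lipschitz constant that argument actually produces is $r$-independent, as you say.

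The non-vanishing of $D^2H(x_0)$, which you asserted without checking, is easy to confirm. From the displayed formula for $a_2$ one gets $\partial_{t_1}^2 a_2=-\cos(t_1-t_2+t_3-t_4)-a_2$ identically, and this is about $0.25$ at $x_0$. So the inequality as literally stated fails for $r<2.5\cdot 10^{-9}$.

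Your replacement, a constant $\overline{\omega}\le 10^8/2$ valid on $B_{1/2}(x_0)$, is what Newton--Kantorovich actually needs, rather than the paper's $\overline{\omega}=10^8 r$. With it, conditions (3)--(4) go through as you claim; for instance, $1-\sqrt{1-s}\le s$ gives the sufficient condition $2\alpha\le r$.
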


The argument will be incredibly wasteful, much better results appear to be true. Numerically, we see that even for a large ball of radius $r \sim 10^{-3}$, one still has
$\| (DH)(x) - (DH)(y) \| \leq 100 \|x-y\|$. Before proving this lemma, let us see how it is enough to apply the Newton-Kantorovich Theorem: Lemma \ref{lem:DfLip} implies that we can set
$ \overline{\omega} = 10^8 r.$
The condition $\alpha \cdot \overline{\omega} \leq 1/2$ is satisfied as soon as $r \leq 10^3$.
It remains to check whether there exists a choice of $r$ that satisfies the inequality
$$ \overline{\omega}^{-1} \left(1 - \sqrt{1 - 2 \alpha \overline{\omega}} \right) \leq r.$$
This is equivalent to
$ 1 - \sqrt{1 - 2 \cdot 10^{-5} r} \leq 10^8 r^2$
which is satisfied for $r \geq 10^{-3}$.

\begin{proof}[Proof of Lemma \ref{lem:DfLip}]
     We use the trivial inequality valid for any $X \in \mathbb{R}^{4 \times 4}$
    $$ \|X\|_{\mbox{\tiny op}} \leq \sqrt{ \sum_{i,j=1}^{4} X_{ij}^2}~.$$
    This reduces the problem to establishing pointwise estimates on 
    $$ \frac{\partial a_{2i}}{\partial t_j}(x) -  \frac{\partial a_{2i}}{\partial t_j}(y) \, ,$$
    for all $1\leq i,j\leq4$.
    Here, we invoke the actual structure of the functions $a_{2i}$. They are the sums of 8, 22, 130 and 295 terms, respectively. These terms are all of the form
    $$ \mbox{coefficient} \cdot (\mbox{monic monomial of degree} \leq 4) \cos(\pm t_1 \pm t_2 \pm t_3 \pm t_4)$$
    where the coefficients are uniformly bounded in absolute value by 4 (and the cosine may also be a sine). Therefore $\partial a_{2i}/\partial t_j$ is the sum of at most $295 \cdot 2 \leq 600$ terms of exactly the same form except that the coefficients are now bounded in absolute value from above by $\leq 16$. We can bound the difference in derivatives by using the mean-value theorem. The second derivative, by the same reasoning, has at most 1200 terms with coefficients bounded by $\leq 32$. The approximate solution is
  $$(t_1, t_2, t_3, t_4) \sim (4.08, 3.11, 2.61, 1.76).$$
  Since $r \leq 0.5$, all entries will be less than $5$ implying an absolute bound of
  $$ 1200 \cdot 32 \cdot 5^4 \cdot { r} = 24 \cdot 10^6 \cdot r$$
  on the variation of each entry. Using the trivial matrix bound, we get
  \[ \| (DH)(x) - (DH)(y) \| \leq  \sqrt{16 \cdot 24^2 \cdot 10^{12}r^2}  \leq 10^8 \cdot r \, . \]  
\end{proof}

\bibliographystyle{abbrv}
\bibliography{floquetBib}

\end{document}